\documentclass[a4paper, 10pt]{amsart}

\usepackage{amsmath, amsfonts, amssymb}
\usepackage{amsthm}
\usepackage{esint}
\usepackage[%colorlinks,
bookmarks=false]{hyperref}
\usepackage{cite}
\usepackage{array}
\usepackage{amsaddr}
\usepackage{enumerate}
\pagestyle{plain}
\usepackage{pgfplots}

\title{\large{Coercivity of Weighted Kohn Laplacians:\\ the case of model monomial weights in $\C^2$}}

\author{Gian Maria Dall'Ara}
\address{Scuola Normale Superiore, Pisa, Italy}
\email{gianmaria.dallara@sns.it}

\date{\today}

\newcommand{\R}{\mathbb{R}}
\newcommand{\N}{\mathbb{N}}
\newcommand{\Q}{\mathbb{Q}}

\newcommand{\C}{\mathbb{C}}

\newcommand{\dbar}{\overline{\partial}}

\newcommand{\be}{\begin{equation*}}
\newcommand{\ee}{\end{equation*}}
\newcommand{\bel}{\begin{equation}}
\newcommand{\eel}{\end{equation}}
\newcommand{\bee}{\begin{eqnarray*}}
\newcommand{\eee}{\end{eqnarray*}}
\newcommand{\eps}{\varepsilon}

\newtheorem{thm}{Theorem}
\newtheorem{lem}[thm]{Lemma}
\newtheorem{prop}[thm]{Proposition}
\newtheorem{dfn}[thm]{Definition}
\newtheorem{cor}[thm]{Corollary}

\begin{document}

\maketitle

\begin{abstract} The \emph{weighted Kohn Laplacian} $\Box_\varphi$ is a natural second order elliptic operator associated to a weight $\varphi:\C^n\rightarrow\R$ and acting on $(0,1)$-forms, which plays a key role in several questions of complex analysis (see, e.g., \cite{haslinger-book}).

We consider here the case of \emph{model monomial weights} in $\C^2$, i.e., 
\be
\varphi(z,w):=\sum_{(\alpha,\beta)\in\Gamma}|z^\alpha w^\beta|^2,
\ee where $\Gamma\subseteq \N^2$ is finite. Our goal is to prove coercivity estimates of the form\bel\label{mu}
\Box_\varphi\geq \mu^2,
\eel where $\mu:\C^n\rightarrow\R$ acts by pointwise multiplication on $(0,1)$-forms, and the inequality is in the sense of self-adjoint operators. We recently proved in \cite{dallara} how to derive from \eqref{mu} new pointwise bounds for the weighted Bergman kernel associated to $\varphi$.

Here we introduce a technique to establish \eqref{mu} with \be
\mu(z,w)=c(1+|z|^a+|w|^b) \qquad(a,b\geq0),\ee where $a,b\geq0$ depend (and are easily computable from) $\Gamma$. As a corollary we also prove that, for a wide class of model monomial weights, the spectrum of $\Box_\varphi$ is discrete if and only if the weight is \emph{not decoupled}, i.e. $\Gamma$ contains at least a point $(\alpha,\beta)$ with $\alpha\neq0\neq\beta$.

Our methods comprise a new \emph{holomorphic uncertainty principle} and linear optimization arguments.

\end{abstract}

\section{Introduction}

\subsection{Motivations and goal of the paper}
Since the work of H\"ormander \cite{hormander}, many results in several complex variables have been established where a key role is played by a (typically, plurisubharmonic) weight $\varphi:\C^n\rightarrow\R$. In particular, an effective way of estimating the Bergman kernel of a weakly pseudoconvex domain in $\C^{n+1}$ is to consider first the same problem on the model domain associated to a plurisubharmonic function $\varphi:\C^n\rightarrow\R$:
\be
\Omega_\varphi:=\{(z,z_{n+1})\in \C^{n+1}\ :\ \Im(z_{n+1})>\varphi(z)\},
\ee
which, after a reduction to the boundary and a Fourier transform in the $\Re(z_{n+1})$-variable, leads to the consideration of a \emph{weighted Bergman kernel} in $\C^n$ (see \cite{haslinger-bergmanszego}). The adjective ``weighted" refers to the fact that the underlying measure is $e^{-2\varphi}$ times Lebesgue measure (see Section \ref{pointwise}). Typically one works under some finite-type assumption on domains, and hence on weights, the prototypical case being when $\varphi$ is a plurisubharmonic non-harmonic polynomial.

Many papers are dedicated to Bergman kernels on domains and weighted Bergman kernels on $\C^n$: the situation is well-understood when $n=1$ (see, e.g., \cite{nagel-rosay-stein-wainger} and \cite{christ}), and there are satisfactory results when $n\geq2$ if the domain or the weight satisfies some auxiliary assumption (e.g., it is convex \cite{mcneal-stein-bergman}, \cite{mcneal-stein-szego}, its complex Hessian has comparable eigenvalues \cite{koenig}, it is decoupled \cite{nagel-stein}, or geometrically separated \cite{charpentier-dupain}). 

Despite the rich literature, the case of a generic plurisubharmonic non-harmonic polynomial weight $\varphi:\C^n\rightarrow\R$ is far from being understood, and it is expected to raise delicate algebraic and geometric questions. A significant step in this direction has been taken recently by Nagel and Pramanik \cite{nagel-pramanik-diagonal}, who obtained pointwise bounds for the diagonal values of the Bergman kernel of $\Omega_\varphi$ when $\varphi$ is, in the author's terminology, a \emph{model monomial weight}, i.e., 
\be
\varphi(z_1,\dots,z_n):=\sum_{\alpha=(\alpha_1,\dots,\alpha_n)\in\Gamma}|z_1^{\alpha_1}\cdots z_n^{\alpha_n}|^2,
\ee
where $\Gamma\subseteq\N^n\setminus\{0\}$ is finite and non-empty. 

We recently proved in \cite{dallara} that pointwise bounds for the weighted Bergman kernel relative to $\varphi:\C^n\rightarrow \R$ can be obtained whenever the \emph{weighted Kohn Laplacian} $\Box_\varphi$, a second order elliptic operator naturally associated to $\varphi$ and acting on $(0,1)$-forms, is known to be $\mu$-coercive, i.e., \bel\label{mu2}
\Box_\varphi\geq \mu^2,
\eel where $\mu:\C^n\rightarrow\R$ acts by pointwise multiplication on $(0,1)$-forms, and the inequality is in the sense of self-adjoint operators (see Section \ref{kohn} for the precise definitions). The estimate we get for the weighted Bergman kernel is characterized by an exponential decay which depends quantitatively on $\mu$ (see Section \ref{pointwise} for details). We highlight the fact that coercivity conditions like \eqref{mu2} are often useful in the study of elliptic operators, and have other interesting consequences (see, e.g., Theorem \ref{model-disc} in the following). 

The goal of this paper is to establish \eqref{mu2} when $\varphi$ is a model monomial weight in $\C^2$, and with \be
\mu(z,w)=c(1+|z|^a+|w|^b) \qquad(a,b\geq0),\ee where $a,b\geq0$ depend (and are easily computable from) $\Gamma$. This is somehow the simplest class of weights that falls outside the scope of the existing literature. 

\subsection{Structure of the paper}
After defining rigorously weighted Kohn Laplacians, $\mu$-coercivity (Section \ref{kohn}), and model monomial weights (Section \ref{model}), we state our theorems in Section \ref{results}. Section \ref{pointwise} relies on \cite{dallara} to deduce estimates on the weighted Bergman kernel associated to model monomial weights in $\C^2$.

The proofs of our theorems are outlined in Section \ref{modelstruct}, and consist of two main ingredients: a linear optimization argument which exploits the specific algebraic nature of our weights (Section \ref{lambda-sec}), and a more general \emph{holomorphic uncertainty principle}, which we introduce in Section \ref{hol-sec}. The two ingredients are put together in Section \ref{est-sec}, where the proofs are concluded.

\subsection{Acknowledgements}

The present paper is part of the Ph.D. research the author conducted at Scuola Normale Superiore in Pisa, under the supervision of Fulvio Ricci (see \cite{dallara-thesis}). The author would like to thank him for the many useful discussions about the subject of this paper.

The author is very grateful to Alexander Nagel for teaching him a lot of harmonic and complex analysis during two visits to the University of Wisconsin, Madison, in 2011 and 2014, and in particular for many careful discussions about the results presented here.

\section{Definitions and statement of the results}

\subsection{Weighted Kohn Laplacians and $\mu$-coercivity}\label{kohn}

Let $\varphi:\C^n\rightarrow\R$ be a $C^2$ plurisubharmonic weight. This means that the \emph{complex Hessian} $H_\varphi=\left(\frac{\partial^2\varphi}{\partial z_j\partial \overline{z}_k}\right)_{j,k=1}^n$ is everywhere non-negative, i.e., \be
(H_\varphi(z)v,v)\geq0\qquad\forall z\in\C^n,\quad v\in\C^n.
\ee

We begin by introducing the weighted $L^2$ space
\be L^2(\C^n,\varphi):=\left\{f:\C^n\rightarrow\C\ :\ \int_{\C^n}|f|^2e^{-2\varphi}<+\infty\right\}.\ee
% accorciare
We denote by $L^2_{(0,q)}(\C^n,\varphi)$ the Hilbert space of $(0,q)$-forms with coefficients in $L^2(\C^n,\varphi)$.   
Since we will be working only with forms of degree less than or equal to $2$, we confine our discussion to these cases. Adopting the standard notation for differential forms, we have that $L^2_{(0,0)}(\C^n,\varphi)=L^2(\C^n,\varphi)$,\be
L^2_{(0,1)}(\C^n,\varphi):=\left\{u=\sum_{1\leq j\leq n} u_j d\overline{z}_j:\ u_j\in L^2(\C^n,\varphi)\quad\forall j\right\},
\ee and
\be
L^2_{(0,2)}(\C^n,\varphi):=\left\{w=\sum_{1\leq j<k\leq n} w_{jk}\ d\overline{z}_j\wedge d\overline{z}_k:\ w_{jk}\in L^2(\C^n,\varphi)\quad\forall j,k\right\}.
\ee
For the norms and the scalar products in these Hilbert spaces of forms we use the same symbols $||\cdot||_\varphi$ and $(\cdot,\cdot)_\varphi$, i.e., if $u,\widetilde{u}\in L^2_{(0,1)}(\C^n,\varphi)$, we have\be
||u||_\varphi^2=\sum_{1\leq j\leq n}||u_j||_\varphi^2,\quad (u,\widetilde{u})_\varphi=\sum_{1\leq j\leq n}(u_j,\widetilde{u}_j)_\varphi,
\ee while if $w,\widetilde{w}\in L^2_{(0,2)}(\C^n,\varphi)$, we have\be
||w||_\varphi^2=\sum_{1\leq j<k\leq n}||w_{jk}||_\varphi^2,\quad (w,\widetilde{w})_\varphi=\sum_{1\leq j<k\leq n}(w_{jk},\widetilde{w}_{jk})_\varphi.
\ee 
This ambiguity should not be a source of confusion.

We now introduce the initial fragment of the \emph{weighted $\dbar$-complex}:
\bel\label{dbar-complex}
L^2(\C^n,\varphi)\stackrel{\dbar}\longrightarrow L^2_{(0,1)}(\C^n,\varphi)\stackrel{\dbar}\longrightarrow L^2_{(0,2)}(\C^n,\varphi).
\eel
The symbol $\dbar$ denotes as usual both the operator $\dbar:L^2(\C^n,\varphi)\rightarrow L^2_{(0,1)}(\C^n,\varphi)$ defined on the domain \be
\mathcal{D}_0(\dbar):=\left\{f\in L^2(\C^n,\varphi):\ \frac{\partial f}{\partial \overline{z}_j}\in L^2(\C^n,\varphi)\ \forall j\right\}
\ee by the formula $\dbar f=\sum_j\frac{\partial f}{\partial \overline{z}_j}d\overline{z}_j$, and the operator $\dbar: L^2_{(0,1)}(\C^n,\varphi)\rightarrow L^2_{(0,2)}(\C^n,\varphi)$ defined on the domain \be
\mathcal{D}_1(\dbar):=\left\{u=\sum_ju_jd\overline{z}_j\in L^2_{(0,1)}(\C^n,\varphi):\ \frac{\partial u_k}{\partial \overline{z}_j}-\frac{\partial u_j}{\partial \overline{z}_k}\in L^2(\C^n,\varphi)\ \forall j,k\right\}
\ee by the formula $\dbar u=\sum_{j<k}\left(\frac{\partial u_k}{\partial \overline{z}_j}-\frac{\partial u_j}{\partial \overline{z}_k}\right)d\overline{z}_j\wedge d\overline{z}_k$. 

The weighted $\dbar$-complex \eqref{dbar-complex} is a complex, i.e., \bel\label{dbar-squared}
\dbar f\in \mathcal{D}_1(\dbar) \quad \text{and}\quad \dbar\dbar f=0\qquad\forall f\in \mathcal{D}_0(\dbar).
\eel 

Taking the Hilbert space adjoints of the operators in \eqref{dbar-complex} (as we can, since the operators are closed and densely defined), we have the dual complex:
\be
L^2(\C^n,\varphi)\stackrel{\dbar^*_\varphi}\longleftarrow L^2_{(0,1)}(\C^n,\varphi)\stackrel{\dbar^*_\varphi}\longleftarrow L^2_{(0,2)}(\C^n,\varphi).
\ee

We use the index $\varphi$ in the symbols for these operators to stress the fact that not only the domains $\mathcal{D}_1(\dbar^*_\varphi)\subseteq L^2_{(0,1)}(\C^n,\varphi)$ and $\mathcal{D}_2(\dbar^*_\varphi)\subseteq L^2_{(0,2)}(\C^n,\varphi)$, but also the formal expressions of $\dbar^*_\varphi$ depend on the weight $\varphi$. We omit these formulas, since they will play no direct role in what follows.

\begin{dfn}
The \emph{weighted Kohn Laplacian} is defined by the formula
\be
\Box_\varphi:=\dbar^*_\varphi\dbar+\dbar\dbar^*_\varphi
\ee 
on the domain 
\be
\mathcal{D}(\Box_\varphi):=\{u\in L^2_{(0,1)}(\C^n,\varphi):\ u\in\mathcal{D}_1(\dbar)\cap\mathcal{D}_1(\dbar^*_\varphi),\ \dbar u\in \mathcal{D}_2(\dbar^*_\varphi)\text{ and }\dbar^* _\varphi u\in \mathcal{D}_0(\dbar)\}.
\ee 
\end{dfn}

The weighted Kohn Laplacian is a densely-defined, closed, self-adjoint and non-negative operator on $L^2_{(0,1)}(\C^n,\varphi)$. The details of the routine arguments proving this fact can be found in \cite{haslinger-book}.

Finally, let us introduce the quadratic form \bel\label{energy}
\mathcal{E}_\varphi(u,v):=(\dbar u,\dbar v)_\varphi+(\dbar^*_\varphi u,\dbar^*_\varphi v)_\varphi,
\eel defined for $u,v\in\mathcal{D}(\mathcal{E}_\varphi):=\mathcal{D}_1(\dbar)\cap\mathcal{D}_1(\dbar^*_\varphi)$. Notice that, by definition of Hilbert space adjoints, \be
(\Box_\varphi u,v)_\varphi=\mathcal{E}_\varphi(u,v) \qquad \forall u\in \mathcal{D}(\Box_\varphi),\quad \forall v\in \mathcal{D}(\mathcal{E}_\varphi).
\ee
We will simply write $\mathcal{E}_\varphi(u)$ for $\mathcal{E}_\varphi(u,u)$. 

The well-known Morrey-Kohn-H\"ormander formula gives an alternative expression for $\mathcal{E}_\varphi(u)$. In order to state it, we identify the $(0,1)$-form $u=\sum_{j=1}^n u_jd\overline{z}_j$ with the vector field $u=(u_1,\dots,u_n):\C^n\rightarrow\C^n$, so that $(H_\varphi u,u)=\sum_{j,k=1}^n\frac{\partial^2\varphi}{\partial z_j\partial \overline{z}_k}u_j\overline{u}_k$. 
The Morrey-Kohn-H\"ormander formula is the following identity:
\bel\label{MKH}
\mathcal{E}_\varphi(u)=\sum_{j,k}\int_{\C^n}|\dbar_k u_j|^2e^{-2\varphi}+2\int_{\C^n} (H_\varphi u,u)e^{-2\varphi}\qquad\forall u\in\mathcal{D}(\mathcal{E}_\varphi).
\eel 
A proof may be found in \cite{haslinger-book} (or \cite{chen-shaw}, for the similar unweighted case). Identity \eqref{MKH} reveals the fundamental role played by $H_\varphi$ in the analysis of $\Box_\varphi$.

We conclude this section with the key notion of $\mu$-coercivity, that already appeared in our previous paper \cite{dallara}.

\begin{dfn}\label{coerc-def}
Given a measurable function $\mu:\C^n\rightarrow[0,+\infty)$, we say that $\Box_\varphi$ is \emph{$\mu$-coercive} if the following inequality holds
\bel\label{coerc-formula}
\mathcal{E}_\varphi(u)\geq ||\mu u||_\varphi^2 \qquad\forall u\in \mathcal{D}(\mathcal{E}_\varphi).
\eel
\end{dfn}

In view of \eqref{MKH}, $\mu$-coercivity is equivalent to the estimate\bel\label{MKH2}
\sum_{j,k}\int_{\C^n}|\dbar_k u_j|^2e^{-2\varphi}+2\int_{\C^n} (H_\varphi u,u)e^{-2\varphi}\geq \int_{\C^n}\mu^2|u|^2e^{-2\varphi}\qquad\forall u\in\mathcal{D}(\mathcal{E}_\varphi).
\eel

The concept of $\mu$-coercivity is a very natural one in the theory of elliptic operators. One can think of it as a spatially localized spectral gap condition (the usual spectral gap condition corresponds to the case when $\mu$ equals a positive constant). The following lemma shows how a qualitative information on the spectrum of $\Box_\varphi$ may be deduced from $\mu$-coercivity.

\begin{lem}\label{pp}
Assume that $\Box_\varphi$ is $\mu$-coercive for some $\mu$ such that \be
\lim_{z\rightarrow\infty}\mu(z)=+\infty.
\ee
Then the operator $\Box_\varphi$ has discrete spectrum.
\end{lem}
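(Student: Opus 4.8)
The plan is to use the standard characterization of operators with discrete spectrum: a non-negative self-adjoint operator $T$ on a Hilbert space $\HH$ with associated closed quadratic form $Q$ (with form domain $\mathcal{D}(Q)$) has discrete spectrum if and only if the embedding of $\mathcal{D}(Q)$, equipped with the norm $\|u\|_Q^2 := Q(u) + \|u\|^2$, into $\HH$ is compact. So it suffices to show that the set
\be
\mathcal{B} := \{u \in \mathcal{D}(\mathcal{E}_\varphi) : \mathcal{E}_\varphi(u) + \|u\|_\varphi^2 \leq 1\}
\ee
is relatively compact in $L^2_{(0,1)}(\C^n,\varphi)$.

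First I would use the $\mu$-coercivity hypothesis together with the Morrey--Kohn--H\"ormander formula \eqref{MKH}: for $u \in \mathcal{B}$ we get simultaneously a uniform bound on $\sum_{j,k}\int_{\C^n}|\dbar_k u_j|^2 e^{-2\varphi}$ plus $2\int (H_\varphi u, u)e^{-2\varphi}$ from the energy bound, and, since $\Box_\varphi$ is $\mu$-coercive, a uniform bound $\int_{\C^n}\mu^2 |u|^2 e^{-2\varphi} \leq \mathcal{E}_\varphi(u) \leq 1$. Because $\mu(z)\to+\infty$ as $z\to\infty$, given $\eps>0$ there is $R>0$ with $\mu(z)^2 \geq 1/\eps$ for $|z|\geq R$, hence $\int_{|z|\geq R}|u|^2 e^{-2\varphi} \leq \eps$ uniformly over $u\in\mathcal{B}$. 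This is the tightness/no-escape-of-mass part. Next, on the ball $\{|z|<R\}$ I would pass to the unweighted picture by writing $v_j := u_j e^{-\varphi}$; since $\varphi$ is $C^2$ (hence $\varphi$ and $\nabla\varphi$ are bounded on $\{|z|<R\}$), the bound on $\sum_{j,k}\int |\dbar_k u_j|^2 e^{-2\varphi}$ plus the $L^2$ bound on $u$ translates into a bound on $\sum_j \|v_j\|_{W^{1,2}(\{|z|<R\})}$ (the zeroth-order Hessian term is harmless on a compact set since $H_\varphi$ is continuous, so $\int(H_\varphi u,u)e^{-2\varphi}$ restricted to the ball is controlled by $\|u\|_\varphi^2$; the full energy is non-negative, which lets us isolate the gradient term up to a bounded error). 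Then the Rellich--Kondrachov theorem gives relative compactness in $L^2(\{|z|<R\})$, equivalently in $L^2(\{|z|<R\},\varphi)$.

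To finish, I would combine the two facts by a standard diagonal/covering argument: cover $\C^n$ by balls $\{|z|<R_k\}$ with $R_k\to\infty$, and for each $\eps>0$ use tightness to reduce to a fixed ball, on which Rellich gives a finite $\eps$-net; concatenating produces a finite $\eps$-net for $\mathcal{B}$ in $L^2_{(0,1)}(\C^n,\varphi)$, so $\mathcal{B}$ is totally bounded, hence relatively compact. By the compact-embedding criterion, $\Box_\varphi$ has discrete spectrum.

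The main obstacle I anticipate is the cleanest way to handle the first-order Morrey--Kohn--H\"ormander term on the bounded region: one only controls $\sum_{j,k}\int|\dbar_k u_j|^2 e^{-2\varphi}$, i.e.\ the \emph{antiholomorphic} derivatives of the components, whereas Rellich needs control of \emph{all} first derivatives of $v_j = u_j e^{-\varphi}$. One resolves this by noting that for a scalar function $g$ compactly supported (or after multiplying by a smooth cutoff), $\int|\partial_j g|^2 = \int|\dbar_j g|^2$ by integration by parts, so holomorphic and antiholomorphic $L^2$ gradients coincide on a fixed ball up to lower-order terms coming from the cutoff and from the $e^{-\varphi}$ factor; these lower-order terms are absorbed using the uniform $L^2$ bound on $u$. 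Care is also needed because elements of $\mathcal{D}(\mathcal{E}_\varphi)$ are not a priori smooth, but since the estimate \eqref{MKH} already encodes that $\dbar_k u_j \in L^2$, each $u_j$ lies in a weighted Sobolev space and the cutoff argument is legitimate by density. Everything else is the by-now-classical Rellich-plus-tightness packaging.
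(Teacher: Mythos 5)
Your argument is correct, but it takes a genuinely different route from the paper. The paper disposes of the lemma in a few lines by citing Haslinger's functional-analytic characterization (from \cite{haslinger-funct}) of when $\Box_\varphi$ admits a compact inverse: namely, the uniform no-escape-of-mass condition $\int_{|z|\geq R}|u|^2e^{-2\varphi}\leq\eps$ for all $u$ with $\mathcal{E}_\varphi(u)\leq 1$, which is immediately equivalent to $\mu$-coercivity with $\mu$ diverging at infinity; discreteness of the spectrum then follows from compactness of the inverse. You instead prove compactness of the form-domain embedding from scratch: the tightness half of your argument is exactly Haslinger's condition, while the local half (Rellich--Kondrachov on balls) is the elliptic input that is hidden inside the cited theorem. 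Your key observation for the local half --- that a cutoff plus the identity $\|\partial_j g\|_{L^2}=\|\dbar_j g\|_{L^2}$ for compactly supported $g$ upgrades the Morrey--Kohn--H\"ormander control of the antiholomorphic derivatives $\dbar_k u_j$ to a full $W^{1,2}$ bound (the Hessian term being non-negative by plurisubharmonicity, so it can simply be dropped) --- is sound, and the mollification remark handles the regularity of elements of $\mathcal{D}(\mathcal{E}_\varphi)$. What you buy is a self-contained proof that does not lean on an external reference and that makes the role of interior ellipticity of $\dbar$ explicit; what the paper's route buys is brevity and a clean reduction of the lemma to a single equivalence already in the literature.
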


We recall that we say that a self-adjoint operator has discrete spectrum if its spectrum is a discrete subset of $\R$ consisting of eigenvalues of finite multiplicity.

\begin{proof}
This is essentially contained in \cite{haslinger-funct}, where the author proves that $\Box_\varphi$ admits a compact inverse $N_\varphi$ if and only if for every $\eps>0$ there exists $R<+\infty$ such that if \be
u\in\mathcal{D}(\mathcal{E}_\varphi)\quad\text{is such that}\quad\mathcal{E}_\varphi(u)\leq1,
\ee then
\be
\int_{|z|\geq R}|u|^2e^{-2\varphi}\leq \eps.
\ee
This condition is clearly equivalent to $\mu$-coercivity for some $\mu$ diverging at infinity. To conclude the proof, recall that a compact operator has discrete spectrum and that if the inverse of a self-adjoint operator has discrete spectrum, the same is true of the operator itself.
\end{proof}

\subsection{Model monomial weights}\label{model}

Let us specialize to $n=2$. We use $z$ and $w$ as coordinates on $\C^2$.

\begin{dfn}\label{model-dfn}
If $\Gamma\subseteq \N^2$ is finite, we define the \emph{model monomial weight} associated to $\Gamma$ as follows:\be
\varphi_\Gamma(z,w):=\sum_{(\alpha,\beta)\in\Gamma}|z^\alpha w^\beta|^2\qquad\forall (z,w)\in\C^2.
\ee

A model monomial weight is said to be \emph{decoupled} if $\Gamma\subseteq \N\times\{0\}\cup\{0\}\times\N$. 

A model monomial weight is said to be \emph{homogeneous} if there are $m,n\geq1$ for which the following two properties hold: \begin{enumerate}
\item $\{(m,0), (0,n)\} \subseteq\Gamma$,
\item every $(\alpha,\beta)\in\Gamma$ lies on the line segment connecting $(m,0)$ and $(0,n)$, i.e.\bel\label{model-segment}
n\alpha+m\beta=nm\qquad\forall (\alpha,\beta)\in\Gamma.
\eel
\end{enumerate}
\end{dfn}

Model monomial weights are sums of moduli squared of holomorphic functions, and thus they are plurisubharmonic.

Of course one could consider the analogous definitions in $\C^n$, associating a model monomial weight to any finite $\Gamma\subseteq \N^n$, but here we shall only treat the two-dimensional case.

Any homogeneous model monomial weight is homogeneous with respect to a system of not necessarily isotropic dilations, i.e.,
\be\varphi_\Gamma(t^\frac{1}{m} z, t^\frac{1}{n} w)=t^2\varphi_\Gamma(z,w)\qquad \forall t>0 \text{ and } (z,w)\in\C^2.\ee

In our analysis a key role will be played by the two quantities $\sigma$ and $\tau$ (depending on $\Gamma$) defined as the smallest non-negative real numbers such that \bel\label{model-sigma-tau}
\frac{1}{\sigma}\leq \frac{\beta}{\alpha}\leq \tau\qquad\forall (\alpha,\beta)\in\Gamma\text{ such that }\alpha,\beta\neq0.
\eel
One can choose two points $(\alpha_1,\beta_1)$ and $(\alpha_2,\beta_2)$ of $\Gamma$ such that 
\be \alpha_1=\sigma\beta_1\quad \text{and}\quad  \beta_2=\tau\alpha_2.\ee 
Notice that $\sigma,\tau<+\infty$. If $\varphi_\Gamma$ is decoupled, then $\sigma$ and $\tau$ are set to be equal to $0$, while if $\varphi_\Gamma$ is not decoupled both $\sigma$ and $\tau$ are always positive. 

\begin{center}

\begin{tikzpicture} 

\tiny

\begin{axis}[
%title={},
%xlabel=$\alpha$, ylabel=$\beta$,
axis x line=bottom, axis y line=left, 
xmin=0, xmax=17, ymin=0, ymax=17, 
xtick={0,4,8,12,16, 17}, ytick={0,3,6,9,12, 17},
xticklabels={0,4,8,12,16,$\alpha$}, yticklabels={0,3,6,9,12, $\beta$}, 
enlargelimits=false] 

\addplot[] coordinates {(0,12)}
[xshift=12pt, yshift=8pt]
        node {(0,12)}
;

\draw [fill] (0,120) circle [radius=1.5];

\addplot[] coordinates {(4,9)}
[yshift=8pt]
        node {$(\alpha_2,\beta_2)$=(4,9)}
;

\draw [fill] (40,90) circle [radius=1.5];

\addplot[] coordinates {(8,6)}
[xshift=12pt, yshift=8pt]
        node {(8,6)}
;

\draw [fill] (80,60) circle [radius=1.5];

\addplot[] coordinates {(12,3)}
[yshift=8pt]
        node {$(\alpha_1,\beta_1)$=(12,3)}
;

\draw [fill] (120,30) circle [radius=1.5];

\addplot[] coordinates {(16,0)}
[xshift=-12pt, yshift=8pt]
        node {(16,0)}
;

\draw [fill] (160,0) circle [radius=1.5];

\addplot[dotted] coordinates{(0,12)(4,9)(8,6)(12,3)(16,0)};
%\addplot[domain=0:16]{x/4};
%\addplot[domain=0:16]{9*x/4};
%\addplot[red, domain=0:1.6]{4*sqrt(1-x^2/16)}
%[xshift=12, yshift=47pt]
%node[pos=1]{$\psi$}
;
%\addplot[red, domain=3.88:4]{4*sqrt(1-x^2/16)}
%[xshift=60, yshift=6pt]
%node[pos=1]{$\theta$}
;

\end{axis}

\end{tikzpicture}
\end{center}

\begin{small}
Figure 1: The plot of the set $\Gamma$ corresponding to the homogeneous model weight $\varphi_\Gamma(z,w)=|z|^{32}+|z|^{24}|w|^6+|z|^{16}|w|^{12}+|z|^8|w|^{18}+|w|^{24}$. In this case $\sigma=4$ and $\tau=\frac{9}{4}$.
\end{small}

\subsection{Our results}\label{results}

We can finally state our two main results. The first states that if a model monomial weight is homogenous, the associated Kohn Laplacian is $\mu$-coercive for a $\mu$ which can be easily computed from $\Gamma$.

\begin{thm}\label{model-thm}
Let $\varphi_\Gamma$ be a homogeneous model monomial weight. Then $\Box_{\varphi_\Gamma}$ is $\mu$-coercive, where \be
\mu(z,w)=c(1+|z|^\sigma+|w|^\tau).
\ee Here $c>0$ is a constant that depends on $\Gamma$.
\end{thm}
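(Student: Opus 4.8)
The plan is to reduce everything to the quantitative inequality \eqref{MKH2} furnished by the Morrey--Kohn--H\"ormander formula, and then to play the two terms on its left-hand side against each other. The first step is the elementary computation of the complex Hessian of $\varphi_\Gamma$: writing $u=(u_1,u_2)$ one finds
\[
(H_{\varphi_\Gamma}u,u)=\sum_{(\alpha,\beta)\in\Gamma}|z|^{2\alpha-2}|w|^{2\beta-2}\,\bigl|\alpha\, w\, u_1+\beta\, z\, u_2\bigr|^2 ,
\]
a sum of nonnegative \emph{rank-one} forms. Keeping only the two vertex contributions $(m,0)$ and $(0,n)$ of a homogeneous $\Gamma$ already gives the cheap bound
\[
\mathcal{E}_{\varphi_\Gamma}(u)\ \geq\ \int_{\C^2}\sum_{j,k}|\dbar_k u_j|^2 e^{-2\varphi_\Gamma}\ +\ c\int_{\C^2}\bigl(|z|^{2m-2}|u_1|^2+|w|^{2n-2}|u_2|^2\bigr)e^{-2\varphi_\Gamma}.
\]
For non-decoupled $\Gamma$ one has $\sigma\leq m-1$ and $\tau\leq n-1$ (so $|z|^{2\sigma}\leq|z|^{2m-2}$, $|w|^{2\tau}\leq|w|^{2n-2}$ at infinity), while for decoupled $\Gamma$ one has $\sigma=\tau=0$ and $\mu$ is a constant; hence the only real issue is to promote the vertex weights $|z|^{2m-2}$, $|w|^{2n-2}$ up to $1+|z|^{2\sigma}$, $1+|w|^{2\tau}$ \emph{near the coordinate axes}, where these weights are too small and, worse, where the rank-one Hessian forms can all degenerate on the same component. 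Using $\mu^2\lesssim 1+|z|^{2\sigma}+|w|^{2\tau}$ and the symmetry $z\leftrightarrow w$ (which swaps $m\leftrightarrow n$, $\sigma\leftrightarrow\tau$, $u_1\leftrightarrow u_2$), it then suffices to establish $\mathcal{E}_{\varphi_\Gamma}(u)\geq c\,\|u\|_{\varphi_\Gamma}^2$ and $\mathcal{E}_{\varphi_\Gamma}(u)\geq c\,\||z|^{\sigma}u\|_{\varphi_\Gamma}^2$.

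The tool for the promotion is the holomorphic uncertainty principle of Section~\ref{hol-sec}: for a function $g$ of one complex variable, control of $\int|\dbar g|^2 e^{-2\psi}$ together with $\int P|g|^2 e^{-2\psi}$ for a (possibly very degenerate) weight $P$ yields $\int Q|g|^2 e^{-2\psi}$ for a substantially larger weight $Q$, quantitatively in terms of $P$ and the rate of $\psi$. One freezes one of the two variables and applies this fibrewise: for $u_1$, using the fibre variable $z$ and the Hessian mass extracted above, one boosts $|z|^{2m-2}|u_1|^2$ near $z=0$ up to $|u_1|^2$; for the cross-terms — such as $|w|^{2\tau}|u_1|^2$, which the vertices do not see at all — one uses the interior monomials of $\varphi_\Gamma$, in particular the extremal points $(\alpha_1,\beta_1)$ with $\alpha_1=\sigma\beta_1$ and $(\alpha_2,\beta_2)$ with $\beta_2=\tau\alpha_2$. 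These govern the ``tube'' regions $\{|z|^{\sigma}|w|\sim1\}$ and $\{|z|\,|w|^{\tau}\sim1\}$, where $|z|^{2\alpha_2-2}|w|^{2\beta_2}=(|z|^2|w|^{2\tau})^{\alpha_2-1}\,|w|^{2\tau}$, so the naive bound is off precisely by the factor the uncertainty principle must recover.

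The second ingredient, the linear optimization of Section~\ref{lambda-sec}, is what pins the exponents to $\sigma$ and $\tau$. At each point of $\C^2$ one has a ``budget'' — the weights $|z|^{2\alpha-2}|w|^{2\beta-2}$, $(\alpha,\beta)\in\Gamma$, together with the $\dbar$-energies — and one must decide how to split it between $u_1$ and $u_2$, assigning enough Hessian mass to each component while leaving its off-diagonal ``debt'' to the other, so that the resulting pointwise inequality, once improved by the uncertainty principle, dominates $\mu^2(|u_1|^2+|u_2|^2)$. The best achievable exponents are the value of an explicit linear program whose data are the lattice points of $\Gamma$, and the homogeneity hypothesis \eqref{model-segment} is exactly what makes this program solvable in closed form, with optimum dictated by $\sigma,\tau$. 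Thus: (i) prove the one-variable uncertainty principle; (ii) solve the linear program, obtaining the partition of the Hessian and the admissible weights; (iii) run the fibrewise argument and assemble the two global estimates above, concluding by \eqref{MKH2}.

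I expect the main obstacle to be step (iii), the bookkeeping of the rank-one degeneracy. Because each form $|z|^{2\alpha-2}|w|^{2\beta-2}|\alpha w u_1+\beta z u_2|^2$ couples $u_1$ and $u_2$ through the off-diagonal term $\alpha\beta\,\overline{z}w\,u_1\overline{u_2}$, extracting a usable amount of $|u_1|^2$ from it costs a controllable but nonzero amount of $|u_2|^2$ (via $|a+b|^2\geq\tfrac12|a|^2-|b|^2$); summed over $\Gamma$ and over all the tube regions, these debts must be simultaneously absorbable by the estimate for the other component. Making the fibrewise arguments for $u_1$ and $u_2$ compatible uniformly across $\C^2$ is the delicate point, and is precisely where the linear-programming accounting does its work.
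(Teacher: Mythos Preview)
Your proposal identifies the two key ingredients---the holomorphic uncertainty principle and a linear-optimization step---and correctly locates the trouble in the tube regions near the coordinate axes. But the architecture differs from the paper's in a way that makes your route substantially harder, and the sketch as written does not close.

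The paper's central simplification, which you do not use, is the \emph{scalar reduction}: one bounds $(H_{\varphi_\Gamma}u,u)\geq\lambda_\Gamma(z,w)\,|u|^2$ pointwise, where $\lambda_\Gamma$ is the least eigenvalue of $H_{\varphi_\Gamma}$. This decouples $u_1$ and $u_2$ instantly, reducing \eqref{MKH3} to the scalar inequality \eqref{MKH4} for each component separately. The rank-one coupling you single out as the ``main obstacle'' therefore never appears. Correspondingly, the linear optimization in the paper is \emph{not} about allocating Hessian mass between $u_1$ and $u_2$; it is about estimating the single scalar $\lambda_\Gamma\approx\det(H_{\varphi_\Gamma})/\mathrm{tr}(H_{\varphi_\Gamma})$, which by Proposition~\ref{model-formulas} is (up to constants) a ratio $\varphi_{\Gamma^{(1)}}/\varphi_{\Gamma^{(2)}}$ of two auxiliary model monomial weights. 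Along each curve $t\mapsto(t^u,t^v)$ this ratio behaves like $t^{m_{u,v}(\Gamma^{(1)})-m_{u,v}(\Gamma^{(2)})}$, and the optimization (Proposition~\ref{lambda-prop}, Corollary~\ref{cor-lambda}) shows that on the set $E=\{|z|\geq1,\ |w|\geq|z|^{-\sigma}\}\cup\{|w|\geq1,\ |z|\geq|w|^{-\tau}\}$ one has $\lambda_\Gamma\gtrsim|z|^{2\sigma}+|w|^{2\tau}$. The uncertainty principle (Lemma~\ref{hol-unc}) is then applied fibrewise, exactly as you suggest, but to a \emph{scalar} function on the complementary regions $U_0,U_r,U_u$ (Proposition~\ref{key}); no cross-component debts arise.

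Your direct attack via the rank-one forms might be salvageable, but the bookkeeping you anticipate---absorbing the $|u_2|^2$ losses incurred while extracting $|u_1|^2$ mass, simultaneously and uniformly over all tube regions---is a genuine difficulty, and your sketch gives no mechanism for it beyond asserting that ``the linear-programming accounting does its work.'' Since the program you describe (budgeting between components) is not the one the paper actually solves, and since the paper's program has a clean closed-form answer precisely because it concerns the scalar $\lambda_\Gamma$, this step is where your proposal has a gap rather than merely a different emphasis.
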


The second one is of a qualitative nature and concerns discreteness of the spectrum for an almost arbitrary model monomial weight.

\begin{thm}\label{model-disc}
If $\varphi_\Gamma$ is a model monomial weight for which there are $m,n\geq2$ such that $(m,0), (0,n)\in\Gamma$, then the Kohn Laplacian $\Box_\varphi$ has discrete spectrum if and only if $\varphi_\Gamma$ is not decoupled.
\end{thm}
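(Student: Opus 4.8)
The plan is to prove the two implications of the stated equivalence separately. The implication ``$\varphi_\Gamma$ not decoupled $\Rightarrow$ $\Box_{\varphi_\Gamma}$ has discrete spectrum'' I would derive from a coercivity estimate of the type \eqref{mu} with a weight $\mu$ diverging at infinity, via Lemma \ref{pp}; the converse, equivalently ``$\varphi_\Gamma$ decoupled $\Rightarrow$ $\Box_{\varphi_\Gamma}$ does not have discrete spectrum'', I would prove by exhibiting an explicit Weyl sequence. I expect the Weyl sequence to be elementary and the coercivity half to carry the weight of the argument.

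For the Weyl sequence, suppose $\varphi_\Gamma$ is decoupled, so that $\varphi_\Gamma(z,w)=p(|z|^2)+q(|w|^2)$ with $p(x)=\sum_{(\alpha,0)\in\Gamma}x^\alpha$ and $q(y)=\sum_{(0,\beta)\in\Gamma}y^\beta$; the hypothesis $(m,0),(0,n)\in\Gamma$ with $m,n\ge2$ forces $p,q$ to be non-constant, hence $e^{-2p},e^{-2q}\in L^1(\C)$. I would test with the forms $u^{(k)}:=c_k\,w^k\,d\overline z$, where $c_k>0$ normalizes $\|u^{(k)}\|_{\varphi_\Gamma}=1$. A direct check gives $u^{(k)}\in\mathcal D(\mathcal E_{\varphi_\Gamma})$, $\dbar u^{(k)}=0$ and $\dbar^*_{\varphi_\Gamma}u^{(k)}=2p'(|z|^2)\,\overline z\,c_k w^k$, and since $e^{-2\varphi_\Gamma}$ factorizes, the scalar products split so that $\mathcal E_{\varphi_\Gamma}(u^{(k)})=\|\dbar^*_{\varphi_\Gamma}u^{(k)}\|_{\varphi_\Gamma}^2$ is a constant $\kappa$ independent of $k$ (equivalently this is \eqref{MKH} with $\dbar u^{(k)}=0$, the relevant $z$-integral $\int_\C(p'+|z|^2p'')e^{-2p}$ being a finite constant). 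On the other hand, by Laplace's method the mass of $|w|^{2k}e^{-2q(|w|^2)}$ escapes to $|w|=\infty$ as $k\to\infty$ (its maximum is attained near $|w|^2\sim(k/2\deg q)^{1/\deg q}$), so $\int_{|(z,w)|\ge R}|u^{(k)}|^2e^{-2\varphi_\Gamma}\to1$ for every fixed $R$. Rescaling $u^{(k)}$ by $\kappa^{-1/2}$, this contradicts the compactness criterion recalled in the proof of Lemma \ref{pp}, so $\Box_{\varphi_\Gamma}$ does not have discrete spectrum.

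For the coercivity half, suppose $\varphi_\Gamma$ is not decoupled; then, besides the axis points $(m,0),(0,n)$ with $m,n\ge2$, the set $\Gamma$ contains some $(\alpha_0,\beta_0)$ with $\alpha_0,\beta_0\ge1$. By Lemma \ref{pp} it suffices to show $\Box_{\varphi_\Gamma}$ is $\mu$-coercive for some $\mu$ with $\mu(z,w)\to\infty$, and I would obtain this by running the argument behind Theorem \ref{model-thm} for $\varphi_\Gamma$ itself: homogeneity is not essential to that method, which uses only finitely many monomials of $\Gamma$ and the combinatorial position of $(m,0)$, $(0,n)$ and $(\alpha_0,\beta_0)$. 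Concretely I would cover $\C^2$ outside a large ball by finitely many conical sectors together with two thin ``horns'' around the $z$- and $w$-axes, pass to forms supported in a single piece by a $\dbar$-version of the IMS localization formula (the cut-off errors being harmless, since the cut-offs may be chosen to vary slowly relative to the target $\mu$), and estimate separately: away from the axes, $\mathcal E_{\varphi_\Gamma}(u)\ge2\int\lambda_{\min}(H_{\varphi_\Gamma})|u|^2e^{-2\varphi_\Gamma}$ with $\lambda_{\min}(H_{\varphi_\Gamma})\to\infty$, using $\lambda_{\min}\ge\det H_{\varphi_\Gamma}/\operatorname{tr}H_{\varphi_\Gamma}$ together with the identity $\det H_{\varphi_\Gamma}=\sum_{\{(\alpha,\beta),(\alpha',\beta')\}}(\alpha\beta'-\alpha'\beta)^2|z|^{2(\alpha+\alpha'-1)}|w|^{2(\beta+\beta'-1)}$, in which the pairs $\{(m,0),(0,n)\}$, $\{(m,0),(\alpha_0,\beta_0)\}$, $\{(0,n),(\alpha_0,\beta_0)\}$ contribute the leading terms; inside each horn, where $H_{\varphi_\Gamma}$ degenerates, I would apply the holomorphic uncertainty principle of Section \ref{hol-sec} and the linear optimization of Section \ref{lambda-sec} as in Section \ref{est-sec}, exploiting that the monomial $|z^{\alpha_0}w^{\beta_0}|^2$ makes $e^{-2\varphi_\Gamma}$ decay transversally to the axis and therefore confines the ``bad'' component of $u$ to a neighborhood of the axis whose width tends to $0$ at infinity, so that the uncertainty gain diverges.

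The step I expect to be the main obstacle is this horn analysis: it is exactly the region where the elementary Hessian bound $\lambda_{\min}(H_{\varphi_\Gamma})$ stays bounded — indeed it vanishes identically along an axis — so the coercivity gain must be recovered from the interaction, visible in the Morrey--Kohn--H\"ormander formula, between holomorphicity and the transversal decay of $e^{-2\varphi_\Gamma}$. Both hypotheses enter precisely here: ``$\Gamma$ not decoupled'' provides the mixed monomial responsible for the shrinking confinement, and $m,n\ge2$ (finite type at least $2$ along each axis) is what lets the uncertainty principle produce a \emph{strictly positive} exponent in $\mu=c(1+|z|^a+|w|^b)$. What remains is bookkeeping of the same nature as in the proof of Theorem \ref{model-thm}: making the conical/horn decomposition and the localization errors precise, and checking $a,b>0$.
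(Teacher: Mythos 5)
Your overall strategy for the coercive half matches the paper's, but your treatment of the decoupled direction is genuinely different: the paper simply quotes Theorem 6.1 of \cite{haslinger-helffer} for ``decoupled $\Rightarrow$ not discrete'', whereas your Weyl sequence $u^{(k)}=c_k w^k\,d\overline{z}$ is a correct, self-contained substitute. The computation is right: $\dbar u^{(k)}=0$, the energy factorizes as a $k$-independent $z$-integral times $\int_\C|w|^{2k}e^{-2q(|w|^2)}$, and the escape of mass of $|w|^{2k}e^{-2q(|w|^2)}$ shows that the form domain does not embed compactly into $L^2_{(0,1)}(\C^2,\varphi)$, so the spectrum is not discrete. (Phrase the final contradiction via the essential spectrum, or via compactness of $(\Box_\varphi+I)^{-1}$, rather than via a compact inverse $N_\varphi$, since injectivity of $\Box_\varphi$ is not given a priori; this is cosmetic.)

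The gap is in the horn analysis, exactly where you predicted trouble. An IMS-type localization adapted to the horn $\{|w|\lesssim|z|^{-\sigma}\}$ cannot have ``harmless'' errors in the sense you claim: a cutoff transitioning across the annulus $|z|^{-\sigma}\leq|w|\leq2|z|^{-\sigma}$ necessarily has $|\partial_{\overline{w}}\chi|\approx|z|^{\sigma}$, so the localization error $\int|\partial_{\overline{w}}\chi|^2|u|^2e^{-2\varphi}$ is of size $|z|^{2\sigma}$ --- the same order as, or larger than, the gain you seek; and in the non-homogeneous setting of Theorem \ref{model-disc} the exponent $\delta$ produced by the linear optimization (Proposition \ref{lambda-prop-2}) satisfies $\delta\leq\sigma$ in general, so the error can strictly dominate the available bound $\lambda_\Gamma\gtrsim|z|^{2\delta}$ in the transition region. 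The cutoffs therefore do \emph{not} vary slowly relative to the target $\mu$ there. The paper avoids cutoffs altogether: in Proposition \ref{key} the energy integral is merely split over the overlapping regions $U_0$, $U_r$, $U_u$, $E$, and on $U_r$ Lemma \ref{hol-unc} is applied fiberwise (Fubini in $w$ for each fixed $z$, on the disc $D(0,2|z|^{-\sigma})$), the potential term $c$ being the infimum of $\lambda_\Gamma$ over the outer half-annulus, which lies in $E$. This yields $\min\{|z|^{2\delta},|z|^{2\sigma}\}=|z|^{2\delta}$ with no commutator loss. Replacing your localization step by this fiberwise argument, the rest of your outline goes through; note that the hypothesis $m,n\geq2$ is consumed in the linear optimization (it is what forces $\delta>0$), not in the uncertainty principle itself.
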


This is, to the author's knowledge, an interesting new phenomenon: as soon as a mixed monomial is ``added" to a two-dimensional decoupled model monomial weight, the spectrum becomes discrete.

We plan to study the same questions in $\C^n$, when $n\geq3$. We expect that our arguments should require a significant effort to be generalized to more variables. Notice that it is not clear how should a generalization of Theorem \ref{model-disc} look like in $\C^3$: we know by work of Haslinger and Helffer (Theorem 6.1 of \cite{haslinger-helffer}) that $\Box_\varphi$ has not discrete spectrum when $\varphi(z)=|z_1|^{2m_1}+|z_2|^{2m_2}+|z_3|^{2m_3}$. Is it enough to ``add" a mixed term of the form $|z_1^{\alpha_1}z_2^{\alpha_2}z_3^{\alpha_3}|^2$ ($\alpha_1,\alpha_2,\alpha_3\neq0$) to the weight to make the spectrum discrete?

\subsection{Pointwise estimates of weighted Bergman kernels}\label{pointwise}

In \cite{dallara} (see also \cite{dallara-thesis}) we proved that an information about $\mu$-coercivity of $\Box_\varphi$ can be converted in a pointwise estimate of the weighted Bergman kernel with respect to $\varphi$. In order to state our result more precisely, we need to recall a few definitions. 

The \emph{weighted Bergman space} with respect to the $C^2$ plurisubharmonic weight $\varphi:\C^n\rightarrow\R$ is defined as follows:\be
A^2(\C^n,\varphi):=\left\{ h:\C^n\rightarrow\C:\ h\text{ is holomorphic and } h\in L^2(\C^n,\varphi)\right\}.
\ee

If $h\in A^2(\C^n,\varphi)$, then in particular it is harmonic and satisfies the mean value property $h(z)=\frac{1}{|B(z,r)|}\int_{B(z,r)}h$. The Cauchy-Schwarz inequality yields\bel\label{berg-bound}
|h(z)|\leq \frac{1}{|B(z,r)|}\sqrt{\int_{B(z,r)}e^{2\varphi}}\ ||h||_\varphi\qquad\forall h\in A^2(\C^n,\varphi),
\eel for any $z\in\C^n$ and $r>0$. This estimate has two elementary consequences:\begin{itemize}
\item[(a)] $A^2(\C^n,\varphi)$ is a closed subspace of $L^2(\C^n,\varphi)$ (by \eqref{berg-bound} convergence of a sequence of $A^2(\C^n,\varphi)$ in the $||\cdot||_\varphi$-norm implies uniform convergence, which preserves holomorphicity). We denote by $B_\varphi$ the orthogonal projector of $L^2(\C^n,\varphi)$ onto $A^2(\C^n,\varphi)$.
\item[(b)] The evaluation mappings $h\mapsto h(z)$ are continuous linear functionals of $A^2(\C^n,\varphi)$, and Riesz Lemma yields a function $K_\varphi:\C^n\times \C^n\rightarrow\C$ such that \be
h(z)=\int_{\C^n}K_\varphi(z,w)h(w)e^{-2\varphi(w)}d\mathcal{L}(w)\qquad\forall z\in\C^n,
\ee and $\overline{K_\varphi(z,\cdot)}\in A^2(\C^n,\varphi)$ for every $z\in\C^n$. 
\end{itemize}
The operator $B_\varphi$ is called the \emph{weighted Bergman projector} and the function $K_\varphi$ the \emph{weighted Bergman kernel} associated to the weight $\varphi$. It is immediate to see that \be
B_\varphi(f)(z)=\int_{\C^n}K_\varphi(z,w)f(w)e^{-2\varphi(w)}d\mathcal{L}(w)\qquad\forall z\in\C^n,
\ee for every $f\in L^2(\C^n,\varphi)$, i.e., $K_\varphi$ is the integral kernel of $B_\varphi$. Since $B_\varphi$ is self-adjoint, $K_\varphi(z,w)=\overline{K_\varphi(w,z)}$.

In \cite{dallara-thesis},  we introduced the class of \emph{admissible weights}. Those are the weights $\varphi$ such that:\begin{enumerate}
\item[(1)] the following $L^\infty$ doubling condition holds ($B(z,r):=\{z':\ |z'-z|<r\}$):\be
\sup_{B(z,2r)}\Delta\varphi\leq D\sup_{B(z,r)}\Delta\varphi \quad\forall z\in \C^n,\ r>0,
\ee for some finite constant $D$ which is independent of $z$ and $r$,
\item[(2)] there exists $c>0$ such that 
\bel\label{adm-lower}
\inf_{z\in\C^n}\sup_{B(z,c)}\Delta\varphi>0.
\eel
\end{enumerate}
If $\varphi$ is an admissible weight, the function\bel\label{radius}
\rho(z):=\sup\left\{r>0:\ \sup_{B(z,r)}\Delta\varphi\leq r^{-2}\right\}
\eel
is a \emph{radius function}, i.e., it is Borel and there exists a constant $C<+\infty$ such that for every $z\in\C^n$ we have\bel\label{radius-ineq}
C^{-1}\rho(z)\leq \rho(z')\leq C\rho(z) \qquad \forall z'\in B(z,\rho(z)).
\eel
See Section 4 of \cite{dallara} for details. It may be interesting to point out that the function $\rho$ defined by \eqref{radius} satisfies the following approximate identity when $\varphi$ is a polynomial:\bel\label{poly-approx}
\rho(z)\approx \min\left\{\left|\frac{\partial^{\alpha+\beta}\Delta\varphi(z)}{\partial z^\alpha\partial \overline{z}^\beta}\right|^{-\frac{1}{|\alpha|+|\beta|+2}}\ \colon\ \alpha,\beta\in\N^n\right\},
\eel where the implicit constant depends on the degree of $\varphi$ and $n$. To prove \eqref{poly-approx}, one can observe that $\sup_{B(z,r)}\Delta\varphi\leq r^{-2}$ is equivalent to $\sum_{\alpha,\beta}r^{2+|\alpha|+|\beta|}\frac{\partial^{\alpha+\beta}\Delta\varphi(z)}{\partial z^\alpha\partial \overline{z}^\beta}\lesssim 1$, as a simple Taylor expansion reveals.

We can finally state the main result of \cite{dallara}. 

\begin{thm}\label{pointwise-thm}
Let $\varphi$ be an admissible weight and assume that there exists \be\kappa:\C^n\rightarrow(0,+\infty)\ee such that:\begin{enumerate}
\item[\emph{(1)}] $\kappa$ is a bounded radius function, 
\item[\emph{(2)}] $\Box_\varphi$ is $\kappa^{-1}$-coercive.
\end{enumerate}
Then there exists $\eps>0$ such that the pointwise bound
\be
|K_\varphi(z,w)|\lesssim e^{\varphi(z)+\varphi(w)}\frac{\max\{\kappa(z),\rho(z)\}}{\rho(z)}\frac{e^{-\eps d(z,w)}}{\rho(z)^{n}\rho(w)^{n}}
\ee 
holds for every $z,w\in\C^n$, where $d(z,w)$ is the distance associated to the Riemannian metric 
\be
\frac{\sum_{j=1}^ndx_j^2+dy_j^2}{\max\{\kappa(z),\rho(z)\}^2}\qquad (z_j=x_j+iy_j).
\ee
\end{thm}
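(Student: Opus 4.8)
The plan is to obtain the pointwise bound from two ingredients: an \emph{elementary local estimate} for holomorphic functions that uses only the admissibility of $\varphi$, and an \emph{Agmon-type exponential localization estimate} for the weighted Bergman projection $B_\varphi=I-\dbar^*_\varphi N_\varphi\dbar$ that encodes the $\kappa^{-1}$-coercivity hypothesis. Granting these, one argues as follows. Write $k_z:=\overline{K_\varphi(z,\cdot)}\in A^2(\C^n,\varphi)$; by the reproducing property $\|k_z\|_\varphi^2=K_\varphi(z,z)$ and $|K_\varphi(z,w)|=|k_z(w)|$. The local estimate gives both the diagonal bound $K_\varphi(z,z)\lesssim e^{2\varphi(z)}/\rho(z)^{2n}$ and the pointwise control $|k_z(w)|^2e^{-2\varphi(w)}\lesssim\rho(w)^{-2n}\int_{B(w,\rho(w))}|k_z|^2e^{-2\varphi}$, so it remains to show that the $L^2$-mass of $k_z$ over $B(w,\rho(w))$ decays exponentially in $d(z,w)$; choosing a weight $\lambda\approx\eps\,d(\cdot,z)$ with $\lambda(z)=0$, this amounts to $\int|k_z|^2e^{2\lambda}e^{-2\varphi}\lesssim\big(\tfrac{\max\{\kappa(z),\rho(z)\}}{\rho(z)}\big)^2K_\varphi(z,z)$, which is what the Agmon localization applied to a localized holomorphic model of $k_z$ near $z$ will deliver. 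Assembling the pieces — and using that $d(\cdot,z)$ varies by $O(1)$ on any ball $B(z',\rho(z'))$, which follows from \eqref{radius-ineq} and the explicit conformal factor — yields exactly the claimed bound.

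\emph{The local estimate.} By \eqref{radius}, $\sup_{B(z,\rho(z))}\Delta\varphi\le\rho(z)^{-2}$, so a Green-potential estimate on $B(z,\tfrac12\rho(z))$ writes $\varphi=u+p$ with $u$ pluriharmonic and $|p|\lesssim1$; writing $u=\Re H$ with $H$ holomorphic, $|h|^2e^{-2\varphi}$ is comparable to the plurisubharmonic function $|he^{-H}|^2$, to which the sub-mean-value property applies, and the doubling condition together with \eqref{radius-ineq} keeps all constants uniform. The same construction produces, for each $z$, a holomorphic $H_z$ with $H_z(z)=0$ and $\Re H_z=\varphi-\varphi(z)+O(1)$ on $B(z,\rho(z))$, so that the localized model $m_z:=\theta_ze^{H_z}$ — with $\theta_z$ a cutoff equal to $1$ on $B(z,\rho(z))$ and supported in $B(z,2\rho(z))$ — satisfies $m_z(z)=1$, $\|m_z\|_\varphi\approx\rho(z)^ne^{-\varphi(z)}$, and $\dbar m_z=e^{H_z}\dbar\theta_z$ supported in the annulus, with $\|\dbar m_z\|_\varphi\lesssim\rho(z)^{n-1}e^{-\varphi(z)}$.

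\emph{The Agmon localization estimate.} Since $\kappa$ is bounded, $\Box_\varphi\ge(\sup\kappa)^{-2}>0$, so $N_\varphi:=\Box_\varphi^{-1}$ exists as a bounded operator and $B_\varphi=I-\dbar^*_\varphi N_\varphi\dbar$ on functions. I claim that if $\lambda$ is smooth with $|\nabla\lambda|\le\eps/\max\{\kappa,\rho\}\le\eps\,\kappa^{-1}$ and $\eps$ is small, then $\|e^\lambda\dbar^*_\varphi N_\varphi g\|_\varphi\lesssim\|\kappa\,e^\lambda g\|_\varphi$. Indeed, with $v:=N_\varphi g$ one has $\mathcal{E}_\varphi(e^\lambda v)=\langle e^{2\lambda}g,v\rangle_\varphi+\langle e^\lambda[\Box_\varphi,e^\lambda]v,v\rangle_\varphi$; the first-order part of the commutator integrates by parts away, leaving a remainder bounded by $\int|\nabla\lambda|^2e^{2\lambda}|v|^2e^{-2\varphi}\le\eps^2\|\kappa^{-1}e^\lambda v\|_\varphi^2$, and combining this with the coercivity $\mathcal{E}_\varphi(e^\lambda v)\ge\|\kappa^{-1}e^\lambda v\|_\varphi^2$ and $\langle e^{2\lambda}g,v\rangle_\varphi\le\|\kappa\,e^\lambda g\|_\varphi\|\kappa^{-1}e^\lambda v\|_\varphi$ one absorbs the remainder to get $\|\kappa^{-1}e^\lambda v\|_\varphi\lesssim\|\kappa\,e^\lambda g\|_\varphi$, hence $\|e^\lambda\dbar^*_\varphi v\|_\varphi\lesssim\|\kappa\,e^\lambda g\|_\varphi$. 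Applying this with $g=\dbar m_z$ (supported in the annulus, where $\lambda=O(1)$ and $\kappa\approx\kappa(z)$) gives $\|e^\lambda(m_z-B_\varphi m_z)\|_\varphi\lesssim\kappa(z)\,\rho(z)^{n-1}e^{-\varphi(z)}$, so $\|e^\lambda B_\varphi m_z\|_\varphi\lesssim\frac{\max\{\kappa(z),\rho(z)\}}{\rho(z)}\,\rho(z)^ne^{-\varphi(z)}$; moreover $m_z-B_\varphi m_z$ is holomorphic on $B(z,\rho(z))$, so the local estimate bounds its value at $z$ by $\lesssim\kappa(z)/\rho(z)$, which (after normalization) identifies $B_\varphi m_z$ with a multiple of $k_z$ and produces the weighted confinement of $k_z$ stated above.

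\emph{Main obstacle.} The substantial points are all at the interface of the two ingredients: (i) manufacturing the smooth weight $\lambda\approx\eps\,d(\cdot,z)$ with the correct Lipschitz bound, since the conformal factor $\max\{\kappa,\rho\}^{-2}$ is a priori only a (radius-)function, which requires a mollification adapted to the scale $\rho$; (ii) carrying the conjugation by $e^\lambda$ through $\Box_\varphi$ cleanly — it is a system on $(0,1)$-forms, not a scalar operator, so one must verify that the dangerous first-order term in $[\Box_\varphi,e^\lambda]$ antisymmetrizes, leaving only the $|\nabla\lambda|^2$ term that the coercivity can absorb; and (iii) closing the normalization loop — making $B_\varphi m_z$ genuinely comparable to $k_z$, and justifying the a priori finiteness of the weighted norms used above, which is typically done by first running the argument with $\lambda$ replaced by a bounded truncation and then passing to the limit. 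It is in step (iii) that the case distinction between $\kappa(z)\le\rho(z)$ and $\kappa(z)>\rho(z)$ enters, and this is the origin of the correction factor $\max\{\kappa,\rho\}/\rho$ in the statement.
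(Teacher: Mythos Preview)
The paper does not contain a proof of this statement: Theorem~\ref{pointwise-thm} is quoted from the author's earlier work \cite{dallara}, and immediately after the statement the paper says ``We refer to the paper for the proof and a deeper discussion of this result.'' There is therefore nothing in the present paper to compare your proposal against.

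That said, your outline is the standard Agmon-type strategy one expects for such a result, and is almost certainly the shape of the argument in \cite{dallara}: a local holomorphic estimate based on the admissibility of $\varphi$ (writing $\varphi$ locally as a pluriharmonic function plus a bounded correction on balls of radius $\rho$), combined with an exponential localization for the canonical solution operator $\dbar^*_\varphi N_\varphi$ obtained by conjugating $\Box_\varphi$ by $e^\lambda$ and absorbing the commutator via the $\kappa^{-1}$-coercivity. The obstacles you flag---regularizing $\lambda$ at scale $\max\{\kappa,\rho\}$, controlling the commutator on $(0,1)$-forms so that only a zeroth-order $|\nabla\lambda|^2$ term survives, and justifying finiteness by truncating $\lambda$---are exactly the points that need care; the identification of $B_\varphi m_z$ with a controlled multiple of $k_z$ is slightly delicate but goes through. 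Since the paper under review gives no details, I cannot certify that your sketch matches \cite{dallara} line by line, but the architecture is correct and there is no visible gap at the level of a proof proposal.
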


We refer to the paper for the proof and a deeper discussion of this result.

To see that the information contained in Theorem \ref{model-thm} can be plugged in Theorem \ref{pointwise-thm}, we prove now the following two claims:\begin{enumerate}
\item model monomial weights are admissible, 
\item $(1+|z|^a+|w|^b)^{-1}$ is a bounded radius function for every $a,b\geq0$.
\end{enumerate}

To verify the first claim, we begin by noticing that a model monomial weight $\varphi$ is a sum of squares of holomorphic functions, and hence it is $C^2$ and plurisubharmonic (alternatively, this follows from Proposition \ref{model-formulas} in Section \ref{lambda-sec}). 

Conditions (1) and (2) of the definition of admissibility only depend on the fact that $\Delta\varphi$ is a non-negative polynomial on $\C^n\equiv\R^{2n}$ ($n$ not necessarily equal to $2$). Let $d\in\N$ be the degree of this polynomial. The mappings\be
p\mapsto \sup_{|u|\leq 1}|p(u)|\quad\text{and}\quad p\mapsto \sup_{|u|\leq 2}|p(u)|
\ee 
are norms on the finite-dimensional vector space of real polynomials in $2n$ real variables of degree $\leq d$ on $\C^n\equiv\R^{2n}$, and therefore they are equivalent. In particular ($z\in\C^n$)\bee
\sup_{|u-z|\leq 2r}\Delta\varphi(u)&=&\sup_{|u|\leq 2}\Delta\varphi(z+ru)\\
&\leq& D\sup_{|u|\leq1}\Delta\varphi(z+ru)=D\sup_{|u-z|\leq r}\Delta\varphi(u).
\eee This proves condition (1).

As $z$ varies in $\C^n$, the polynomial $\Delta\varphi(z+\cdot)$ varies on a hyperplane not containing the origin of the vector space of real polynomials in $2n$ real variables of degree $\leq d$. To see this, just notice that any of the coefficients of a monomial of highest degree of $\Delta\varphi$ is not affected by translations. Since $p\mapsto \sup_{|u|\leq1}|p(u)|$ is a norm, we have\be
\inf_{z\in\C^n}\sup_{|u-z|\leq1}\Delta\varphi(u)=\inf_{z\in\C^n}\sup_{|u|\leq1}\Delta\varphi(z+u)>0,
\ee that is condition (2). This concludes the proof that $\varphi$ is an admissible weight.

To prove the second claim, just notice that the following stronger statement holds (if $\kappa(z,w):=(1+|z|^a+|w|^b)^{-1}$):  
\be
C^{-1}\kappa(z_0,w_0)\leq\kappa(z,w)\leq C\kappa(z_0,w_0)\qquad\forall (z,w)\in B((z_0,w_0),1).
\ee
In fact, it is equivalent to the elementary estimate \be
(1+|z|^a+|w|^b)\leq C(1+|z_0|^a+|w_0|^b)\qquad\forall (z,w)\in B((z_0,w_0),1).
\ee 

Thus, one can apply Theorem \ref{pointwise-thm} to deduce new pointwise estimates for $K_\varphi$ when $\varphi$ is a homogeneous model monomial weight. See also the proof of Theorem \ref{model-disc}, where we establish a weaker $\mu$-coercivity bound with $\mu$ of the form $1+|z|^\delta+|w|^\delta$. By the considerations above, this gives pointwise bounds for weighted Bergman kernels associated to more general model monomial weights.

\section{Outline of the proofs}\label{modelstruct}

Proving Theorem \ref{model-thm} and Theorem \ref{model-disc} boils down to establishing $\mu$-coercivity of certain weighted Kohn Laplacians for an appropriate $\mu$. This is clear for Theorem \ref{model-thm}, while it requires a little discussion in the case of Theorem \ref{model-disc}. 

First of all, the ``only if" part of Theorem \ref{model-disc} follows from work of Haslinger and Helffer: Theorem 6.1 of \cite{haslinger-helffer} states that if the weight is decoupled, then the spectrum of the weighted Kohn Laplacian is not discrete (this works for a wide class of weights including polynomial ones). Thus we are reduced to proving the ``if" part. In view of Lemma \ref{pp}, it is enough to show that if $\varphi$ is a non-decoupled model monomial weight with $(m,0)$, $(0,n)\in\Gamma$ for some $m,n\geq2$, then $\Box_\varphi$ is $\mu$-coercive for some $\mu$ diverging at infinity.

Thus, recalling \eqref{MKH2}, our goal is the estimate
\bel\label{MKH3}
\sum_{j=1}^2\left(\int_{\C^2}\left|\frac{\partial u_j}{\partial \overline{z}}\right|^2e^{-2\varphi}+\int_{\C^2}\left|\frac{\partial u_j}{\partial \overline{w}}\right|^2e^{-2\varphi}\right)+2\int_{\C^2} (H_\varphi u,u)e^{-2\varphi}\gtrsim \int_{\C^2}\mu^2|u|^2e^{-2\varphi}\eel
for every $u\in\mathcal{D}(\mathcal{E}_\varphi)$, in two cases:
\begin{enumerate}
\item[(a)] when $\varphi$ is a homogeneous model monomial weight and $\mu=1+|z|^\sigma+|w|^\tau$ (with $\sigma$ and $\tau$ as in Section \ref{model}),
\item[(b)] when $\varphi$ is a non-decoupled model monomial weight with $(m,0)$, $(0,n)\in\Gamma$ for some $m,n\geq2$, and $\mu$ diverging at infinity (dependent on $\varphi$).
\end{enumerate}

Notice that we use the notation $A\gtrsim B$ to denote $A\geq cB$ for a constant $c$, which is allowed to depend only on the weight.\newline

We now introduce the function:
\be
\lambda_\Gamma(z,w):=\min_{v\in\C^2\setminus\{0\}}\frac{(H_{\varphi_\Gamma}(z,w)v,v)}{|v|^2},
\ee
which equals the minimal eigenvalue of the complex Hessian of $\varphi_\Gamma$, and set as our goal the estimate for scalar-valued functions\bel\label{MKH4}
\int_{\C^2}\left|\frac{\partial u}{\partial \overline{z}}\right|^2e^{-2\varphi}+\int_{\C^2}\left|\frac{\partial u}{\partial \overline{w}}\right|^2e^{-2\varphi}+2\int_{\C^2} \lambda_\Gamma|u|^2e^{-2\varphi}\gtrsim \int_{\C^2}\mu^2|u|^2e^{-2\varphi}\qquad\forall u\in C^\infty_c(\C^2),
\eel
with $\varphi$ and $\mu$ as described above. The deduction of \eqref{MKH3} from \eqref{MKH4} is a simple approximation argument, which we omit.

The first consequence of \eqref{MKH4} is that $\Box_{\varphi_\Gamma}$ is $\mu$-coercive for $\mu\approx\sqrt{\lambda_\Gamma}$. Unfortunately, this is not enough to establish our theorems: for example, it will be clear later that $\lambda_\Gamma$ never diverges at infinity. This situation has to be compared to that in the theory of Schr\"odinger operators where the operator $-\Delta+V$ has discrete spectrum even if the potential $V:\R^d\rightarrow[0,+\infty)$ does not diverge at infinity. Discreteness of the spectrum of $-\Delta+V$ is in fact well-known (see, e.g. \cite{iwatsuka}) to be equivalent to the energy estimate\bel\label{schrod}
\int_{\R^d}|\nabla u|^2+\int_{\R^d}V|u|^2\gtrsim \int_{\R^d}\mu^2|u|^2,
\eel
for some $\mu$ diverging at infinity. This fact has to be compared with Lemma \ref{pp}. In this context, it is in virtue of the uncertainty principle that we expect \eqref{schrod} to hold for some $\mu^2$ larger than $V$. 

Inspired by this similarity, we look for a \emph{holomorphic uncertainty principle} that may serve an analogous purpose for our problem. Notice that the left hand of \eqref{MKH4} differs from that of \eqref{schrod} in two relevant aspects: the presence of the weight and the nature of the ``kinetic term", which contains only the barred derivatives and is therefore weaker. 

To turn these ideas into actual proofs, we proceed as follows:
\begin{enumerate}
\item In Section \ref{lambda-sec} we begin by showing that $\lambda_\Gamma$ is comparable to a rational function of $|z|^2$ and $|w|^2$ that can be computed from $\Gamma$ (Section \ref{det-tr}). This is the part where we use the specific nature of model monomial weights. Then, thanks to this approximate formula and a linear optimization argument (Section \ref{lin-prog}), we split $\C^2$ in regions where $\lambda_\Gamma$ is bounded from below by different functions of the form $|z|^{2a}|w|^{2b}$, where $a,b\in\Q$. If the weight is homogeneous we obtain sharp estimates (and as a consequence the statement of Theorem \ref{model-thm} is quantitative in nature). 
\item In Section \ref{hol-sec} we introduce our \emph{holomorphic uncertainty principle} to take care of the regions where $\lambda_\Gamma$ is too small.
\item Finally, in Section \ref{est-sec} we prove \eqref{MKH4}: outside of a hyperbolic neighborhood of the complex coordinate axes whose shape is dictated by the weight $\varphi$, we use the estimates of Section \ref{lambda-sec}, while on this neighborhood we exploit the holomorphic uncertainty principle.
\end{enumerate}

We would like to highlight the fact that the holomorphic uncertainty principle works for weights that are not necessarily polynomial, and in fact we believe that some more general formulation of it may hold and be useful for other problems as well.

\section{Estimating $\lambda_\Gamma$}\label{lambda-sec}

\subsection{Approximate formula for $\lambda_\Gamma$}\label{det-tr} Since $\varphi_\Gamma(z,w)=\sum_{(\alpha,\beta)\in\Gamma}|z|^{2\alpha} |w|^{2\beta}$, model weights only depend on the squared moduli of the coordinates. In view of this, we introduce the polynomial \bel\label{model-pol}
p_\Gamma(x,y):=\sum_{(\alpha,\beta)\in\Gamma}x^\alpha y^\beta\qquad (x,y)\in \R_{\geq0}^2,
\eel and in what follows we think of $x$ and $y$ both as independent non-negative variables and as denoting $|z|^2$ and $|w|^2$ respectively, so that 
\be\varphi_\Gamma(z,w)=p_\Gamma(|z|^2,|w|^2)=p_\Gamma(x,y).\ee This ambiguity will not be a source of confusion.

We now prove a very useful formula for the determinant and the trace of the complex Hessian $H_{\varphi_\Gamma}$ of a model monomial weight. In order to state it, we associate to any $\Gamma\subseteq \N^2$ four further subsets of $\N^2$:
\bee
\Gamma_r&:=&\{(\alpha,\beta)\in\Gamma\colon\ \alpha\neq0\} \quad\text{ ($r$ stands for ``right")},\\
\Gamma_u&:=&\{(\alpha,\beta)\in\Gamma\colon\ \beta\neq0\} \quad\text{($u$ stands for ``upper")},\\
\Gamma^{(1)}&:=&\{(\alpha,\beta)+(\gamma,\delta)\colon\ (\alpha,\beta), (\gamma,\delta)\in\Gamma \text{ linearly independent}\}-(1,1),\\
\Gamma^{(2)}&:=&\left[\Gamma_r-(1,0)\right]\cup\left[\Gamma_u-(0,1)\right].
\eee
Here $\Gamma_r-(1,0)$ denotes the collection $\{(\alpha-1,\beta):\ (\alpha,\beta)\in\Gamma_r\}$, and the other symbols have analogous meanings. Observe that if $(\alpha,\beta)$ and $(\gamma,\delta)$ are linearly independent elements of $\N^2$, then $(\alpha+\gamma-1,\beta+\delta-1)\in\N^2$, and hence $\Gamma^{(1)}\subseteq \N^2$.

\begin{prop}\label{model-formulas} If $\Gamma\subseteq \N^2$ is finite, then
\bee
\text{det}(H_{\varphi_\Gamma}(z,w))&\approx& \varphi_{\Gamma^{(1)}}(z,w),\\
\text{tr}(H_{\varphi_\Gamma}(z,w))&\approx& \varphi_{\Gamma^{(2)}}(z,w),
\eee where the implicit constants depend only on $\Gamma$. 
\end{prop}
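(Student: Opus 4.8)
The plan is to compute the entries of the complex Hessian $H_{\varphi_\Gamma}$ directly, observe that up to unimodular phase factors they are polynomials in $x=|z|^2$ and $y=|w|^2$ with non-negative coefficients, and then read off $\det$ and $\operatorname{tr}$ as explicit such polynomials whose coefficients are bounded above and below by constants depending only on $\Gamma$. Differentiating $\varphi_\Gamma=\sum_{(\alpha,\beta)\in\Gamma}z^\alpha\bar z^\alpha w^\beta\bar w^\beta$ gives $(H_{\varphi_\Gamma})_{11}=\sum_{(\alpha,\beta)\in\Gamma_r}\alpha^2 x^{\alpha-1}y^\beta$, $(H_{\varphi_\Gamma})_{22}=\sum_{(\alpha,\beta)\in\Gamma_u}\beta^2 x^\alpha y^{\beta-1}$, and $(H_{\varphi_\Gamma})_{12}=\overline{(H_{\varphi_\Gamma})_{21}}=\bar z w\sum_{(\alpha,\beta)\in\Gamma}\alpha\beta\,x^{\alpha-1}y^{\beta-1}$, so that $|(H_{\varphi_\Gamma})_{12}|^2=xy\big(\sum_{(\alpha,\beta)\in\Gamma}\alpha\beta\,x^{\alpha-1}y^{\beta-1}\big)^2$ is again a polynomial in $x,y$ with non-negative coefficients (the phases cancel in the modulus, and they are absent in the diagonal entries).

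For the trace this already suffices: the non-zero terms of $(H_{\varphi_\Gamma})_{11}$ are indexed by $\Gamma_r-(1,0)$ and those of $(H_{\varphi_\Gamma})_{22}$ by $\Gamma_u-(0,1)$, with coefficients $\alpha^2$, resp. $\beta^2$, lying in $[1,C_\Gamma]$ since $\Gamma$ is finite. Hence $\operatorname{tr}(H_{\varphi_\Gamma})=(H_{\varphi_\Gamma})_{11}+(H_{\varphi_\Gamma})_{22}$ is, up to a factor in $[1,2C_\Gamma]$ (a monomial can lie in both pieces of $\Gamma^{(2)}$, which is harmless), comparable to $\sum_{(\mu,\nu)\in\Gamma^{(2)}}x^\mu y^\nu=\varphi_{\Gamma^{(2)}}(z,w)$. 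This uses only the elementary remark that for a finite $S\subseteq\N^2$ and coefficients $c_{\mu\nu}\in[c_0,C_0]$ with $0<c_0\le C_0$ one has $c_0\sum_S x^\mu y^\nu\le\sum_S c_{\mu\nu}x^\mu y^\nu\le C_0\sum_S x^\mu y^\nu$ for all $x,y\ge 0$.

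The determinant is the substantive point. Expanding,
\[
\text{det}(H_{\varphi_\Gamma})=(H_{\varphi_\Gamma})_{11}(H_{\varphi_\Gamma})_{22}-|(H_{\varphi_\Gamma})_{12}|^2=\sum_{(\alpha,\beta),(\gamma,\delta)\in\Gamma}\big(\alpha^2\delta^2-\alpha\beta\gamma\delta\big)\,x^{\alpha+\gamma-1}y^{\beta+\delta-1},
\]
the sum running over ordered pairs. Symmetrizing under the interchange $(\alpha,\beta)\leftrightarrow(\gamma,\delta)$ replaces the coefficient of $x^{\alpha+\gamma-1}y^{\beta+\delta-1}$ by $\tfrac12\big(\alpha^2\delta^2+\beta^2\gamma^2-2\alpha\beta\gamma\delta\big)=\tfrac12(\alpha\delta-\beta\gamma)^2$; the diagonal pairs contribute $0$, and for $(\alpha,\beta)\neq(\gamma,\delta)$ the integer $(\alpha\delta-\beta\gamma)^2$ is positive exactly when the two points are linearly independent, in which case it lies in $[1,C_\Gamma]$. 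Regrouping the surviving terms by the value $(\alpha+\gamma-1,\beta+\delta-1)$, which by definition runs precisely over $\Gamma^{(1)}$ (with multiplicity bounded by $|\Gamma|^2$), and invoking the same elementary remark as above, we conclude $\text{det}(H_{\varphi_\Gamma})\approx\sum_{(\mu,\nu)\in\Gamma^{(1)}}x^\mu y^\nu=\varphi_{\Gamma^{(1)}}(z,w)$, with implied constants depending only on $\Gamma$.

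I expect no real obstacle here: the computation is short, and the only genuinely non-routine ingredient is the identity $\alpha^2\delta^2+\beta^2\gamma^2-2\alpha\beta\gamma\delta=(\alpha\delta-\beta\gamma)^2$ together with the observation that this quantity vanishes exactly on linearly dependent pairs — which is what makes the monomial support of $\text{det}(H_{\varphi_\Gamma})$ coincide with $\Gamma^{(1)}$. The only point requiring a little care is the bookkeeping of multiplicities, so as to keep all implied constants dependent on $\Gamma$ alone; this is automatic because $\Gamma$ is finite.
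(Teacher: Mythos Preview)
Your proof is correct and follows essentially the same approach as the paper: both compute the Hessian entries and arrive at the key symmetrized identity $\det(H_{\varphi_\Gamma})=\tfrac{1}{2}\sum_{(\alpha,\beta),(\gamma,\delta)\in\Gamma}(\alpha\delta-\beta\gamma)^2\,x^{\alpha+\gamma-1}y^{\beta+\delta-1}$, from which the comparison with $\varphi_{\Gamma^{(1)}}$ is immediate. The only cosmetic difference is that the paper first derives a Lagrange-type formula $\det(H_\varphi)=\tfrac12\sum_{j,k}|\partial_z h_j\,\partial_w h_k-\partial_w h_j\,\partial_z h_k|^2$ valid for any weight $\varphi=\sum_j|h_j|^2$ and then specializes to monomials, whereas you expand and symmetrize directly; the underlying computation is identical.
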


In particular, this proposition shows that the model monomial weight $\varphi_\Gamma$ is weakly plurisubharmonic (i.e., $\lambda_\Gamma$ vanishes) on the set where $\varphi_{\Gamma^{(1)}}$ vanishes. Since $\varphi_{\Gamma^{(1)}}$ is itself a monomial model weight, this set may be easily determined from $\Gamma^{(1)}$, and may be empty, the origin, a complex coordinate axes ($\{z=0\}$ or $\{w=0\}$), or $\{z=0\}\cup\{w=0\}$. We omit the elementary details.

\begin{proof}
Let $h_1,\cdots,h_N:\C^2\rightarrow \C$ be holomorphic functions and consider the weight \be
\varphi:=\sum_{j=1}^N |h_j|^2.
\ee We have\be
\partial_z\dbar_z\varphi = \sum_j |\partial_zh_j|^2,\quad \partial_w\dbar_w\varphi = \sum_j |\partial_wh_j|^2,
\ee and \be
\partial_z\dbar_w\varphi = \sum_j \partial_zh_j\overline{\partial_w h_j},\quad \partial_w\dbar_z\varphi = \sum_j \partial_wh_j\overline{\partial_z h_j}.
\ee Hence\bee
\text{det}(H_\varphi) &=& \partial_z\dbar_z\varphi \cdot\partial_w\dbar_w\varphi - \partial_z\dbar_w\varphi\cdot\partial_w\dbar_z\varphi\\
&=& \sum_{j,k} |\partial_zh_j|^2|\partial_wh_k|^2 - \sum_{j,k} \partial_zh_j\overline{\partial_w h_j}\partial_wh_k\overline{\partial_z h_k}\\
&=& \frac{1}{2}\left(\sum_{j,k} |\partial_zh_j|^2|\partial_wh_k|^2 + |\partial_wh_j|^2|\partial_zh_k|^2 - 2\Re(\partial_zh_j\overline{\partial_w h_j}\partial_wh_k\overline{\partial_z h_k})\right)\\
&=& \frac{1}{2}\sum_{j,k} |\partial_zh_j\partial_wh_k - \partial_wh_j\partial_zh_k|^2.
\eee We also have\be
\text{tr}(H_\varphi)= \partial_z\dbar_z\varphi + \partial_w\dbar_w\varphi =\sum_j |\partial_zh_j|^2+|\partial_wh_j|^2.
\ee
Specializing to $\varphi_\Gamma(z,w):=\sum_{(\alpha,\beta)\in\Gamma}|z^\alpha w^\beta|^2$, we obtain (here l.i. stands for ``linearly independent"):
\bee
\text{det}(H_{\varphi_\Gamma}(z,w))&=&\frac{1}{2}\sum_{(\alpha,\beta),(\gamma,\delta)\in\Gamma}(\alpha\delta-\beta\gamma)^2|z^{\alpha+\gamma-1}w^{\beta+\delta-1}|^2\\
&\approx& \sum_{(\alpha,\beta),(\gamma,\delta)\in\Gamma \text{ l. i.}}|z^{\alpha+\gamma-1}w^{\beta+\delta-1}|^2\\
&\approx&\varphi_{\Gamma^{(1)}}(z,w),
\eee and \bee
\text{tr}(H_{\varphi_\Gamma}(z,w))&=&\sum_{(\alpha,\beta)\in\Gamma}\alpha^2|z^{\alpha-1}w^\beta|^2+\beta^2|z^\alpha w^{\beta-1}|^2\\
&\approx& \sum_{(\alpha,\beta)\in\Gamma\colon\alpha\neq0}|z^{\alpha-1} w^\beta|^2+\sum_{(\alpha,\beta)\in\Gamma\colon\beta\neq0}|z^\alpha w^{\beta-1}|^2\\
&\approx&\varphi_{\Gamma^{(2)}}(z,w).
\eee
It is easy to see that the implicit constants in the approximate equalities above depend only on $\Gamma$.
\end{proof}

Since $\det(H_{\varphi_\Gamma})$ equals the product of the eigenvalues of $H_{\varphi_\Gamma}$, and $\text{tr}(H_{\varphi_\Gamma})$ equals their sum, by Proposition \ref{model-formulas} we have\bel\label{lambda-formula}
\varphi_{\Gamma^{(1)}}(z,w)\approx \lambda_\Gamma(z,w)\cdot\varphi_{\Gamma^{(2)}}(z,w),
\eel
for any finite $\Gamma\subseteq \N^2$.

\subsection{A linear optimization argument to estimate $\lambda_\Gamma$}\label{lin-prog}
If $(u,v)\in\R^2$, we consider the curve in the non-negative quadrant $\R_{\geq0}^2$\be
C_{u,v}:\quad t\longmapsto (t^u,t^v)\qquad(t\geq1).
\ee
Notice that if $(u',v')$ is proportional to $(u,v)$, $C_{u,v}$ and $C_{u',v'}$ have the same range. If $A\subseteq \N^2$ is finite, using the notation \eqref{model-pol} we have\be
p_A(C_{u,v}(t))=\sum_{(\alpha,\beta)\in A} (t^u)^\alpha(t^v)^\beta=\sum_{(\alpha,\beta)\in A} t^{u\alpha+v\beta}\approx t^{m_{u,v}(A)}\quad(t\geq1),
\ee
where $m_{u,v}(A)$ is the maximum of the linear functional $(\xi,\eta)\mapsto u\xi+v\eta$ on the set $A\subseteq \R^2$, and the implicit constant depends on $\Gamma$ and is independent of $u,v$. 

We are interested in estimating $\lambda_\Gamma(z,w)$ when $(x,y)=(|z|^2,|w|^2)$ lies on the curve $C_{u,v}$. By formula \eqref{lambda-formula},\be
p_{\Gamma^{(1)}}(|z|^2,|w|^2)\approx\lambda_\Gamma(z,w) \cdot p_{\Gamma^{(2)}}(|z|^2,|w|^2).
\ee
If $(|z|^2,|w|^2)=(t^u,t^v)=C_{u,v}(t)$ ($t\geq1$), the discussion above gives\bel\label{lambda-zwt}
\lambda_\Gamma(z,w) \approx t^{m_{u,v}(\Gamma^{(1)})-m_{u,v}(\Gamma^{(2)})}.
\eel
We now present our analysis of this optimization problem first in the homogeneous case (Proposition \ref{lambda-prop}), where we obtain more precise results, and then in the more general case of the weights appearing in Theorem \ref{model-disc} (Proposition \ref{lambda-prop-2}).
\begin{prop}\label{lambda-prop} Let $\varphi_\Gamma$ be a homogeneous model monomial weight. Let $m,n, \sigma, \tau, \alpha_1,\beta_1,\alpha_2,\beta_2$ be as in Section \ref{model}. We define the three regions of $\C^2$:\bee
E_1&:=&\{|z|\geq 1,\quad |w|\leq |z|^\frac{m}{n}\},\\
E_2&:=&\{|w|\geq 1,\quad |w|^\nu\leq |z|\leq |w|^\frac{n}{m}\},
\eee and \be
E_3:=\{|w|\geq 1,\quad |z|\leq |w|^\nu\},
\ee where $\nu:=\frac{n-1-\beta_2}{\alpha_2-1}$. 
The following approximate identities hold:
\bee
\lambda_\Gamma(z,w)&\approx& |z|^{2\alpha_1}|w|^{2(\beta_1-1)}\qquad\forall (z,w)\in E_1,\\
\lambda_\Gamma(z,w)&\approx& |w|^{2(n-1)}\qquad\qquad\quad\forall (z,w)\in E_2,\\
\lambda_\Gamma(z,w)&\approx& |z|^{2(\alpha_2-1)}|w|^{2\beta_2}\qquad\forall (z,w)\in E_3.
\eee
\end{prop}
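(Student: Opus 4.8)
The plan is to run the optimization machinery of Section~\ref{lin-prog}. As there, identify $(z,w)$ with $(x,y)=(|z|^2,|w|^2)$ and recall from \eqref{lambda-formula} that, since $\varphi_A(z,w)=\sum_{(\alpha,\beta)\in A}x^\alpha y^\beta$ is comparable with constant $\#A$ to $\max_{(\alpha,\beta)\in A}x^\alpha y^\beta$, one has
\[
\lambda_\Gamma(z,w)\ \approx\ \frac{\max_{(\alpha,\beta)\in\Gamma^{(1)}}x^\alpha y^\beta}{\max_{(\gamma,\delta)\in\Gamma^{(2)}}x^\gamma y^\delta}
\]
uniformly, with implicit constant depending only on $\Gamma$; equivalently, along each curve $C_{u,v}$ one has \eqref{lambda-zwt}, and the regions $E_1,E_2,E_3$ will be swept out by suitable cones of directions $(u,v)$. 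By the obvious symmetry $z\leftrightarrow w$ (which exchanges $m$ with $n$, $\sigma$ with $\tau$, and the three regions with their reflections) we may and do assume $m\geq n$.

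The first step is to write down $\Gamma^{(1)}$ and $\Gamma^{(2)}$ for a homogeneous weight. Order $\Gamma=\{P_0,\dots,P_k\}$ along the segment, so $P_0=(m,0)$, $P_k=(0,n)$, $P_1=(\alpha_1,\beta_1)$ and $P_{k-1}=(\alpha_2,\beta_2)$; distinct $P_i,P_j$ are automatically linearly independent, being collinear points on a line that misses the origin. Hence $\Gamma^{(1)}=\{P_i+P_j-(1,1):i<j\}$ is a set of collinear lattice points lying on $\{n\alpha+m\beta=2nm-n-m\}$, whose extreme points are $P_0+P_1-(1,1)=(m+\alpha_1-1,\beta_1-1)$ (largest $\alpha$) and $P_{k-1}+P_k-(1,1)=(\alpha_2-1,n+\beta_2-1)$ (largest $\beta$). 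Similarly $\Gamma^{(2)}=(\Gamma_r-(1,0))\cup(\Gamma_u-(0,1))$, where $\Gamma_r-(1,0)\subseteq\{n\alpha+m\beta=nm-n\}$ has extreme points $(m-1,0)$ and $(\alpha_2-1,\beta_2)$, and $\Gamma_u-(0,1)\subseteq\{n\alpha+m\beta=nm-m\}$ has extreme points $(\alpha_1,\beta_1-1)$ and $(0,n-1)$.

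The crucial structural fact is that all three of these point sets lie on lines of the form $n\alpha+m\beta=\text{const}$, so on each of them the linear functional $(\alpha,\beta)\mapsto u\alpha+v\beta$ is increasing in $\alpha$ when $u>\tfrac nm v$ and decreasing when $u<\tfrac nm v$; in $(x,y)$-terms this is exactly the alternative $|w|\lessgtr|z|^{m/n}$, so the whole optimization has only two regimes. On $E_1=\{|z|\ge1,\ |w|\le|z|^{m/n}\}$ the largest-$\alpha$ extreme point wins on every line: the numerator is $\approx|z|^{2(m+\alpha_1-1)}|w|^{2(\beta_1-1)}$ and the denominator is $\max\{|z|^{2(m-1)},|z|^{2\alpha_1}|w|^{2(\beta_1-1)}\}$; on $E_1$ one has $|w|^{\beta_1-1}\le|z|^{(m/n)(\beta_1-1)}\le|z|^{m-1-\alpha_1}$, the last inequality being precisely where $m\ge n$ enters (it is equivalent, via $n\alpha_1+m\beta_1=nm$, to $m\ge n$), so the first term wins in the denominator and $\lambda_\Gamma\approx|z|^{2\alpha_1}|w|^{2(\beta_1-1)}$. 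On the complementary regime $|w|\ge|z|^{m/n}$, which contains $E_2\cup E_3=\{|w|\ge1,\ |z|\le|w|^{n/m}\}$, the largest-$\beta$ extreme point wins: the numerator is $\approx|z|^{2(\alpha_2-1)}|w|^{2(n+\beta_2-1)}$, the denominator is $\max\{|z|^{2(\alpha_2-1)}|w|^{2\beta_2},|w|^{2(n-1)}\}$, and the two competitors coincide exactly on $|z|=|w|^\nu$, $\nu=\tfrac{n-1-\beta_2}{\alpha_2-1}$; hence on $E_2$ ($|z|\ge|w|^\nu$) the first dominates and $\lambda_\Gamma\approx|w|^{2(n-1)}$, while on $E_3$ ($|z|\le|w|^\nu$) the second dominates and $\lambda_\Gamma\approx|z|^{2(\alpha_2-1)}|w|^{2\beta_2}$. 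One finishes by checking that on the pieces where $|z|<1$ or $|w|<1$ (and on the bounded set $|z|,|w|<1$) all competing monomials differ only by $\Gamma$-dependent bounded factors, consistently with the stated formulas.

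I expect the only real work to lie in the optimization bookkeeping of the third paragraph: verifying uniformly which of the handful of extreme monomials dominates in each region, and that the boundaries $|w|=|z|^{m/n}$ and $|z|=|w|^\nu$ coincide exactly with the transitions between dominant monomials — together with the corner cases $\beta_1=1$, $\alpha_2=1$ (where $\nu$ must be read as the appropriate limit and $E_2$ degenerates), and $\Gamma=\{(m,0),(0,n)\}$. The algebraic identity $n\alpha_i+m\beta_i=nm$ makes each comparison collapse to a single scalar inequality (either $m\ge n$ or the defining inequalities of the $E_j$), so no genuinely hard linear programming remains once $\Gamma^{(1)}$ and $\Gamma^{(2)}$ have been written down.
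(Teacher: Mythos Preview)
Your proposal is correct and follows essentially the same approach as the paper: both reduce the computation of $\lambda_\Gamma$ via \eqref{lambda-formula} to a linear optimization over $\Gamma^{(1)}$ and $\Gamma^{(2)}$, split into the two regimes $|w|\lessgtr|z|^{m/n}$, and identify the subcase boundary $|z|=|w|^\nu$; your pointwise monomial comparison (exploiting that $\Gamma^{(1)}$, $\Gamma_r-(1,0)$, and $\Gamma_u-(0,1)$ all lie on lines $n\alpha+m\beta=\text{const}$, so the maximum is always at an extreme point) is just a reformulation of the paper's directional computation of $m_{u,v}(\Gamma^{(1)})-m_{u,v}(\Gamma^{(2)})$ along the curves $C_{u,v}$.
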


The figure below depicts the regions appearing in Proposition \ref{lambda-prop}.

\begin{center}

\begin{tikzpicture} 
\small
\begin{axis}[
%title={\boxed{\text{Figure 2}}},
%xlabel=$x$, ylabel=$y$,
axis x line=bottom, axis y line=left, 
xmin=0, xmax=3, ymin=0, ymax=3, 
xtick={1,3},
ytick={1,3},
xticklabels={1,$x=|z|^2$},
yticklabels={1,$y=|w|^2$},
enlargelimits=false] 

\addplot[dotted, domain=1:3, samples=100]{x^3}
[xshift= 100pt,yshift=120pt]
node[pos=1]{$E_2$};
\addlegendentry{$x=y^\nu$}

\addplot[dashed, domain=1:3, samples=100]{x^1.3}
[xshift= 140pt,yshift=60pt]
node[pos=1]{$E_1$};
\addlegendentry{$x=y^\frac{n}{m}$}

\addplot[] coordinates {(1,0)(1,1)}
[xshift= 35pt,yshift=100pt]
node[pos=1]{$E_3$};

\addplot[] coordinates {(0,1)(1,1)}
[xshift= 32pt,yshift=26pt];

\end{axis}

\end{tikzpicture}

\end{center}

\begin{proof} Observe that for homogeneous model monomial weights the definitions of $\Gamma^{(1)}$ and $\Gamma^{(2)}$ take the slightly simpler forms\bee
\Gamma^{(1)}&:=&\{(\alpha+\gamma,\beta+\delta):\ (\alpha,\beta)\neq (\gamma,\delta)\in\Gamma\}-(1,1),\\
\Gamma^{(2)}&:=&\left(\Gamma\setminus \{(0,n)\}-(1,0)\right)\cup\left(\Gamma\setminus \{(m,0)\}-(0,1)\right).
\eee
Fix $(u,v)\in\R^2$ such that either $u$ or $v$ is positive: the union of the corresponding family of curves $C_{u,v}$ is $E_1\cup E_2\cup E_3$. By convexity considerations it is clear that the maximum of $u\xi+v\eta$ on $\Gamma$ is attained at $(m,0)$ if $um\geq vn$, while it is attained at $(0,n)$ if $um\leq vn$. We separately analyze the two cases, assuming without loss of generality that $m\geq n$.

\subsection*{Case I: $v\leq\frac{m}{n}u$} We have\be
m_{u,v}(\Gamma^{(1)})=um+m_{u,v}(\Gamma\setminus \{(m,0)\})-u-v,
\ee and 
\bee
m_{u,v}(\Gamma^{(2)})&=&\max\{m_{u,v}(\Gamma\setminus \{(0,n)\})-u,m_{u,v}(\Gamma\setminus \{(m,0)\})-v\}\\
&=&\max\{um-u,m_{u,v}(\Gamma\setminus \{(m,0)\})-v\}.
\eee
Hence \bel\label{lambda-max-formula}
m_{u,v}(\Gamma^{(1)})-m_{u,v}(\Gamma^{(2)})=\min\{m_{u,v}(\Gamma\setminus \{(m,0)\})-v, u(m-1)\}.
\eel
It is almost immediate to see that the maximum of $u\xi+v\eta$ on $\Gamma\setminus \{(m,0)\}$ is attained at the point $(\alpha_1,\beta_1)$ satisfying $\frac{\alpha_1}{\beta_1}=\sigma$, which we introduced above. Identity \eqref{lambda-max-formula} becomes\be
m_{u,v}(\Gamma^{(1)})-m_{u,v}(\Gamma^{(2)})=\min\{u\alpha_1+v\beta_1-v, u(m-1)\}.
\ee

The inequality $u\alpha_1+v\beta_1-v\leq u(m-1)$ holds if and only if\bel\label{lambda-condition-m-n}
v\leq \frac{m-1-\alpha_1}{\beta_1-1}u.
\eel This condition depends only on the ratio of $u$ and $v$, as it should. Observe that \be
\frac{m}{n}\leq\frac{m-1-\alpha_1}{\beta_1-1}.
\ee In fact, the inequality above is obviously equivalent to $mn-n\geq\alpha_1n+m\beta_1-m$, and recalling the homogeneity condition \eqref{model-segment} we see that this is the same as $m\geq n$, which we assumed before. This shows that condition \eqref{lambda-condition-m-n} is a consequence of $v\leq\frac{m}{n}u$ (since $u>0$ in this case) and thus \bel\label{lambda-caseI}
m_{u,v}(\Gamma^{(1)})-m_{u,v}(\Gamma^{(2)})=u\alpha_1+v\beta_1-v.
\eel

\subsection*{Case II: $u\leq\frac{n}{m}v$} Proceeding analogously to the case $um\geq vn$, this time formula \eqref{lambda-max-formula} is replaced by \be
m_{u,v}(\Gamma^{(1)})-m_{u,v}(\Gamma^{(2)})=\min\{m_{u,v}(\Gamma\setminus \{(0,n)\})-u, v(n-1)\},
\ee and the maximum of $ux+vy$ on $\Gamma\setminus \{(0,n)\}$ is attained at the point $(\alpha_2,\beta_2)$ satisfying $\frac{\beta_2}{\alpha_2}=\tau$. Hence\be
m_{u,v}(\Gamma^{(1)})-m_{u,v}(\Gamma^{(2)})=\min\{u\alpha_2+v\beta_2-u, v(n-1)\}.
\ee Here comes the difference with Case I: the minimum above equals $u\alpha_2+v\beta_2-u$ if and only if \be
u\leq\frac{n-1-\beta_2}{\alpha_2-1}v,
\ee but this condition is not automatically implied by the inequality $u\leq\frac{n}{m}v$. In fact \be
\frac{n-1-\beta_2}{\alpha_2-1}\leq \frac{n}{m},
\ee as may be easily verified using \eqref{model-segment} and the fact that $n\leq m$. Thus there are two further sub-cases: if \be
u\leq\frac{n-1-\beta_2}{\alpha_2-1}v
\ee then \bel\label{lambda-caseIIa}
m_{u,v}(\Gamma^{(1)})-m_{u,v}(\Gamma^{(2)})=u\alpha_2+v\beta_2-u, 
\eel while if \be
\frac{n-1-\beta_2}{\alpha_2-1}v\leq u\leq \frac{n}{m}v
\ee then \bel\label{lambda-caseIIb}
m_{u,v}(\Gamma^{(1)})-m_{u,v}(\Gamma^{(2)})=v(n-1). 
\eel

Putting \eqref{lambda-zwt}, \eqref{lambda-caseI}, \eqref{lambda-caseIIa}, and \eqref{lambda-caseIIb} together, we conclude the proof. \end{proof}

We state as a separate corollary the consequence of Proposition \ref{lambda-prop} that will be used in the proof of Theorem \ref{model-thm}.

\begin{cor}\label{cor-lambda}
Let $\varphi_\Gamma$ be a homogeneous model monomial weight. Let $\sigma$ and $\tau$ be as in Section \ref{model}. If we define the region of $\C^2$:\be
E:=\{|z|\geq1, |w|\geq |z|^{-\sigma}\}\cup\{|w|\geq1, |z|\geq |w|^{-\tau}\},
\ee we have\be
\lambda_\Gamma(z,w)\gtrsim |z|^{2\sigma}+|w|^{2\tau}\qquad\forall (z,w)\in E.
\ee
\end{cor}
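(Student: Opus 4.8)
The plan is to feed Proposition~\ref{lambda-prop} into the statement and reduce everything to a few monomial inequalities governed by the segment \eqref{model-segment}. After possibly exchanging $z$ and $w$ (which swaps $\sigma$ and $\tau$ and preserves homogeneity) I may assume $m\geq n$. Each of the two sets whose union is $E$ is contained in $\{|z|\geq1\}\cup\{|w|\geq1\}=E_1\cup E_2\cup E_3$, so it suffices to treat $E\cap E_i$ for $i=1,2,3$; and since $|z|^{2\sigma}+|w|^{2\tau}\leq 2\max\{|z|^{2\sigma},|w|^{2\tau}\}$, it is enough to prove $\lambda_\Gamma\gtrsim|z|^{2\sigma}$ and $\lambda_\Gamma\gtrsim|w|^{2\tau}$ separately on each. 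I would first collect the consequences of \eqref{model-segment} and $m\geq n$ obtained by writing $\alpha_i=m(n-\beta_i)/n$: namely $\alpha_1=\sigma\beta_1$, $\beta_2=\tau\alpha_2$, $\beta_1\geq1$, $\alpha_2\geq1$ (with the degenerate reading $(\alpha_1,\beta_1)=(0,n)$, $(\alpha_2,\beta_2)=(m,0)$, $\sigma=\tau=0$ when $\Gamma$ is decoupled), $\tau\leq n-1$, $\sigma\leq\frac{m(n-1)}{n}$, and $\sigma\tau\geq1$ whenever $\sigma,\tau>0$ (because then $\alpha_1\geq\alpha_2$ and $\beta_1\leq\beta_2$).

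On $E\cap E_1$ Proposition~\ref{lambda-prop} gives $\lambda_\Gamma\approx|z|^{2\alpha_1}|w|^{2(\beta_1-1)}$: substituting $\alpha_1=\sigma\beta_1$ and $|w|\geq|z|^{-\sigma}$ (or $|w|\geq1$, depending on which half of $E$ one is in) yields $\lambda_\Gamma\gtrsim|z|^{2\sigma}$ because $\beta_1-1\geq0$, while inserting $|w|\leq|z|^{m/n}$ and simplifying via $n\alpha_1+m\beta_1=nm$ turns the exponent relevant for $\lambda_\Gamma\gtrsim|w|^{2\tau}$ into a nonnegative multiple of $n-1-\tau$. On $E\cap E_2$, where $\lambda_\Gamma\approx|w|^{2(n-1)}$, the bound $\lambda_\Gamma\gtrsim|w|^{2\tau}$ is just $\tau\leq n-1$, and $\lambda_\Gamma\gtrsim|z|^{2\sigma}$ follows from $|z|\leq|w|^{n/m}$ together with $\sigma\leq\frac{m(n-1)}{n}$. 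These two cases are bookkeeping.

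The substantive case is $E\cap E_3$, where $\lambda_\Gamma\approx|z|^{2(\alpha_2-1)}|w|^{2\beta_2}$ (one may assume $\alpha_2\geq2$, since $\alpha_2=1$ forces $m=n$ and $\beta_2=n-1$, so that $E_3$ degenerates and the $E_2$ argument applies). Using $\beta_2=\tau\alpha_2$ one rewrites $\lambda_\Gamma\approx(|z|^2|w|^{2\tau})^{\alpha_2-1}|w|^{2\tau}$, and since $|z|^2|w|^{2\tau}\geq1$ on all of $E\cap\{|w|\geq1\}$ (it is one of the two defining inequalities of $E$, the other holding trivially there), the bound $\lambda_\Gamma\gtrsim|w|^{2\tau}$ is immediate. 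For $\lambda_\Gamma\gtrsim|z|^{2\sigma}$ one examines $\lambda_\Gamma/|z|^{2\sigma}\approx|z|^{2(\alpha_2-1-\sigma)}|w|^{2\beta_2}$: when $\sigma\leq\alpha_2-1$ this is straightforward (trivial once $|z|\geq1$, and on $\{|z|<1\}\cap E$ one uses $|z|\geq|w|^{-\tau}$ and $\beta_2=\tau\alpha_2$), but when $\sigma>\alpha_2-1$ one must estimate the $|z|$-factor from below via $|z|\leq|w|^{\nu}$, and because $|w|\geq1$ on $E_3$ the whole inequality reduces to
\[
\beta_2\ \geq\ \nu\,(\sigma+1-\alpha_2),\qquad \nu=\frac{n-1-\beta_2}{\alpha_2-1}.
\]
This is where I expect the only real difficulty. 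The displayed inequality is equivalent to $\sigma(n-1-\beta_2)\leq(\alpha_2-1)(n-1)$, and after clearing denominators with $\sigma=\frac{m(n-\beta_1)}{n\beta_1}$ and $\alpha_2=\frac{m(n-\beta_2)}{n}$ it becomes
\[
m(n-\beta_1)(n-1-\beta_2)\ \leq\ \beta_1(n-1)\bigl(m(n-\beta_2)-n\bigr).
\]
The difference of the two sides is affine in $\beta_1$ with positive leading coefficient (here one needs $\beta_2\leq n-2$, which is exactly what $\nu>0$ encodes) and equals $(n-1)(m-n)\geq0$ at $\beta_1=1$; since $\beta_1\geq1$ it is therefore $\geq0$. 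Put differently, the single non-formal fact behind the corollary is that $\sigma$ — read off the ``lowest'' mixed point — and the ``highest'' mixed point $(\alpha_2,\beta_2)$ are not free: they both lie on the one segment \eqref{model-segment}, and that is precisely the constraint needed.
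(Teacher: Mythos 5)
Your proposal is correct and follows essentially the same route as the paper: both feed Proposition \ref{lambda-prop} into the claim, split $E$ over $E_1$, $E_2$, $E_3$, and reduce to exponent inequalities coming from the homogeneity relation \eqref{model-segment}. The only divergence is on $E\cap E_3$, where the paper shortcuts your affine-in-$\beta_1$ computation by bounding $\lambda_\Gamma\gtrsim|z|^{2(\alpha_2-1)+2\frac{m}{n}\beta_2}=|z|^{2(m-1)}\geq|z|^{2\sigma}$ via $|w|\geq|z|^{m/n}$ and $\sigma\leq\alpha_1\leq m-1$; your longer verification is nonetheless valid (and in fact treats the sub-case $|z|<1$ more explicitly than the paper does).
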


The figure below depicts the region $E$ appearing in Proposition \ref{lambda-prop-2}. Notice that its complement contains two hyperbolic neighborhoods of the coordinate axes.

\begin{center}

\begin{tikzpicture} 
\small
\begin{axis}[
%title={\boxed{\text{Figure 3}}},
%xlabel=$x$, ylabel=$y$,
axis x line=bottom, axis y line=left, 
xmin=0, xmax=3, ymin=0, ymax=3, 
xtick={1,3},
ytick={1,3},
xticklabels={1,$x$},
yticklabels={1,$y$},
enlargelimits=false]

\addplot[dashed, domain=1:3, samples=100]{1/x^3}
[xshift= 80pt,yshift=10pt]
;
\addlegendentry{$y=x^{-\sigma}$}

\addplot[dotted, domain=0:1, samples=100]{1/x}
[xshift= 100pt,yshift=100pt]
node[pos=1]{$E$}
;
\addlegendentry{$x=y^{-\tau}$}

%\addplot[] coordinates {(2,0)(2,2)}
%[xshift= 25pt,yshift=80pt]
%node[pos=1]{$U_u$};

%\addplot[] coordinates {(0,2)(2,2)}
%[xshift= 32pt,yshift=26pt]
%node[pos=1]{$U_0$};

\end{axis}

\end{tikzpicture}

\end{center}

\begin{proof}
Let $E_1, E_2, E_3, \sigma, \tau, \alpha_1,\beta_1,\alpha_2,\beta_2,m,n,\nu$ be as in Proposition \ref{lambda-prop}.

Observe that $E_2\subseteq E\subseteq E_1\cup E_2\cup E_3$. 

If $(z,w)\in E\cap E_1$, then $|z|^\frac{m}{n}\geq|w|\geq |z|^{-\sigma}$ and, by Proposition \ref{lambda-prop},
\be
\lambda_\Gamma(z,w)\approx|z|^{2\alpha_1}|w|^{2(\beta_1-1)}\geq |z|^{2(\alpha_1-\sigma\beta_1+\sigma)}=|z|^{2\sigma}
\ee and \be
\lambda_\Gamma(z,w)\approx|z|^{2\alpha_1}|w|^{2(\beta_1-1)}\geq |w|^{2(\frac{n}{m}\alpha_1+\beta_1-1)}=|w|^{2(n-1)}.
\ee In the first identity we used the definition of $(\alpha_1,\beta_1)$, while in the second one we used \eqref{model-segment}. Notice that $\tau=\frac{\beta_2}{\alpha_2}\leq \beta_2\leq n-1$, and hence\bel\label{est-E1}
\lambda_\Gamma(z,w)\gtrsim |z|^{2\sigma}+|w|^{2\tau}\qquad\forall (z,w)\in E\cap E_1.
\eel

If $(z,w)\in E_2$, in particular $|z|\leq |w|^\frac{n}{m}$ and $|w|^{2(n-1)}\geq |z|^{2\frac{m}{n}(n-1)}$. Notice that $|w|\geq1$ on $E_2$ and $\nu\geq0$ and thus $|z|$ is also $\geq1$. If we show that $\frac{m}{n}(n-1)\geq \sigma$, we can then deduce that $|w|^{2(n-1)}\geq |z|^{2\sigma}$. To prove the inequality above one can plug in the identity $\sigma=\frac{\alpha_1}{\beta_1}$ and use \eqref{model-segment}. This, together with Proposition \ref{lambda-prop} and the already observed fact that $\tau\leq n-1$, allows to write that\bel\label{est-E2}
\lambda_\Gamma(z,w)\gtrsim|w|^{2(n-1)}\gtrsim |z|^{2\sigma}+|w|^{2\tau}\qquad\forall (z,w)\in E_2.
\eel

Finally, if $(z,w)\in E\cap E_3$ then in particular $|w|^\frac{n}{m}\geq|z|\geq |w|^{-\tau}$, and Proposition \ref{lambda-prop} yields\be
\lambda_\Gamma(z,w)\gtrsim|z|^{2(\alpha_2-1)}|w|^{2\beta_2}\geq |w|^{2(-\tau\alpha_2+\tau+\beta_2)}=|w|^{2\tau}
\ee and \be
\lambda_\Gamma(z,w)\gtrsim|z|^{2(\alpha_2-1)}|w|^{2\beta_2}\geq |z|^{2(\alpha_2-1+\frac{m}{n}\beta_2)}=|z|^{2(m-1)}.
\ee The last identity follows from \eqref{model-segment}. Since $\sigma=\frac{\alpha_1}{\beta_1}\leq \alpha_1\leq (m-1)$, we have\bel\label{est-E3}
\lambda_\Gamma(z,w)\gtrsim|z|^{2\sigma}+|w|^{2\tau}\qquad\forall (z,w)\in E\cap E_3.
\eel
Putting \eqref{est-E1}, \eqref{est-E2}, and \eqref{est-E3} together we obtain the thesis. 
\end{proof}

Let us proceed with the analysis of the weights appearing in Theorem \ref{model-disc}.

\begin{prop}\label{lambda-prop-2}
Let $\varphi_\Gamma$ be a non-decoupled model monomial weight such that $(m,0)$, $(0,n)\in\Gamma$ for some $m,n\geq2$. Let $\sigma$ and $\tau$ be as in Section \ref{model}. If we define the region of $\C^2$:\be
E:=\{|z|\geq1, |w|\geq |z|^{-\sigma}\}\cup\{|w|\geq1, |z|\geq |w|^{-\tau}\},
\ee we have\be
\lambda_\Gamma(z,w)\gtrsim |z|^{2\delta}+|w|^{2\delta}\qquad\forall (z,w)\in E,
\ee for some $\delta>0$ depending on $\Gamma$.
\end{prop}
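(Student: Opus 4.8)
The plan is to run the linear‑optimization scheme of Section~\ref{lin-prog}, but now aiming only at \emph{positivity} of the relevant exponent rather than at its sharp value. Writing $x=|z|^2$, $y=|w|^2$, the region $E$ becomes $\{x\geq1,\ y\geq x^{-\sigma}\}\cup\{y\geq1,\ x\geq y^{-\tau}\}$, and an elementary manipulation (comparing $t^u,t^v,t^{-\sigma u},t^{-\tau v}$ for $t\geq1$) shows that a curve $C_{u,v}(t)=(t^u,t^v)$, $t\geq1$, lies entirely in $E$ exactly when $(u,v)$ belongs to the closed cone
\be
\mathcal C:=\{u\geq0,\ v\geq-\sigma u\}\cup\{v\geq0,\ u\geq-\tau v\},
\ee
and, conversely, every point of $E$ with $\max(|z|,|w|)>1$ lies on such a curve for some $(u,v)\in\mathcal C$ and $t>1$. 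By \eqref{lambda-zwt} we have $\lambda_\Gamma(z,w)\approx t^{e(u,v)}$ along $C_{u,v}$, where $e(u,v):=m_{u,v}(\Gamma^{(1)})-m_{u,v}(\Gamma^{(2)})$ and the implicit constants depend only on $\Gamma$. Since $e$ and $\max(u,v)$ are both positively homogeneous of degree one, and $\max(u,v)$ is bounded below by a positive constant on $\mathcal C\cap S^1$ (if $u\leq0$ and $v\leq0$ then membership in $\mathcal C\setminus\{0\}$ is impossible), the proposition will follow — the leftover torus $\{|z|=|w|=1\}$ being handled by continuity of $\lambda_\Gamma$ together with the obvious inequality $\lambda_\Gamma(z,w)>0$ there, which holds because the monomial $|z^{m-1}w^{n-1}|^2$ occurs in $\varphi_{\Gamma^{(1)}}$ — once we prove
\be
e(u,v)>0\qquad\forall\,(u,v)\in\mathcal C\setminus\{0\}.
\ee
Indeed, this together with homogeneity and compactness yields $e(u,v)\geq\delta\max(u,v)$ on $\mathcal C$ for some $\delta>0$ depending only on $\Gamma$, whence $\lambda_\Gamma\gtrsim t^{\delta\max(u,v)}\gtrsim t^{\delta u}+t^{\delta v}=|z|^{2\delta}+|w|^{2\delta}$.

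To prove $e>0$ on $\mathcal C\setminus\{0\}$, observe that $m_{u,v}(\Gamma^{(2)})=\max\{m_{u,v}(\Gamma_r)-u,\ m_{u,v}(\Gamma_u)-v\}$ and $m_{u,v}(\Gamma^{(1)})=Q(u,v)-u-v$, where $Q(u,v):=\max\{u(\alpha+\gamma)+v(\beta+\delta)\ :\ (\alpha,\beta),(\gamma,\delta)\in\Gamma\text{ linearly independent}\}$; hence $e(u,v)=\min\{e_1(u,v),e_2(u,v)\}$ with $e_1=Q-v-m_{u,v}(\Gamma_r)$ and $e_2=Q-u-m_{u,v}(\Gamma_u)$. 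The involution $z\leftrightarrow w$ swaps $\sigma\leftrightarrow\tau$, $m\leftrightarrow n$, $\Gamma_r\leftrightarrow\Gamma_u$, $e_1\leftrightarrow e_2$, and maps $\mathcal C$ onto itself, so it is enough to show $e_1>0$ on $\mathcal C\setminus\{0\}$. The point is to bound $Q(u,v)$ from below by a well‑chosen linearly independent pair containing a maximizer $P^\ast=(\alpha^\ast,\beta^\ast)$ of $(\xi,\eta)\mapsto u\xi+v\eta$ on $\Gamma_r$ (so $\alpha^\ast\geq1$): we need a point $P'\in\Gamma$ linearly independent of $P^\ast$ with $(u,v)\cdot P'>v$. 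I would split into three cases. If $v>0$, take $P'=(0,n)$, which is linearly independent of $P^\ast$ because $\alpha^\ast\geq1$; this gives $e_1\geq vn-v=v(n-1)\geq v>0$ since $n\geq2$. If $v\leq0$ — which forces $u>0$ and $v\geq-\sigma u$, since then necessarily $(u,v)\in\mathcal C_1:=\{u\geq0,\ v\geq-\sigma u\}$ — and $\beta^\ast\neq0$, take $P'=(m,0)$ (linearly independent of $P^\ast$ because $\beta^\ast\neq0$); this gives $e_1\geq um-v\geq um>0$. Finally, if $v\leq0$ and $\beta^\ast=0$, so $P^\ast=(\alpha^\ast,0)$, take $P'=(\alpha_1,\beta_1)$, the mixed point of $\Gamma$ with $\alpha_1=\sigma\beta_1$, which exists because $\varphi_\Gamma$ is not decoupled and is linearly independent of $(\alpha^\ast,0)$ because $\beta_1>0$; then $e_1\geq u\alpha_1+v\beta_1-v=\beta_1(\sigma u+v)-v$, and this is $\geq-v\geq0$ since $\beta_1>0$ and $\sigma u+v\geq0$, with strict inequality because $v<0$ gives $-v>0$, while $v=0$ forces $u>0$ and hence $\beta_1(\sigma u+v)=\beta_1\sigma u>0$.

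I expect the main obstacle to be this last case: one has to recognize that when the $\Gamma_r$‑maximizer sits on the $\alpha$‑axis it cannot be paired productively with $(0,n)$ or $(m,0)$, and must instead be paired with the defining point $(\alpha_1,\beta_1)$ of $\sigma$, using crucially the constraint $v\geq-\sigma u$ that cuts out $\mathcal C_1$. This is precisely the step where the hyperbolic shape of $E$ (encoded by $\sigma$ and $\tau$) and the non‑decoupledness of $\varphi_\Gamma$ enter in an essential way; the remaining ingredients — the explicit description of $\mathcal C$, the reduction by homogeneity and compactness, and the $z\leftrightarrow w$ symmetry — are routine.
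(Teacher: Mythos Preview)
Your proof is correct and follows the same linear-optimization strategy as the paper: both reduce to bounding $e(u,v)=m_{u,v}(\Gamma^{(1)})-m_{u,v}(\Gamma^{(2)})$ from below on the half-cone $\{u\geq0,\ v\geq-\sigma u\}$ (and then appeal to the $z\leftrightarrow w$ symmetry), and both make essential use of the defining point $(\alpha_1,\beta_1)$ of $\sigma$ to handle the boundary ray $v=-\sigma u$. The executions differ in two respects worth noting. First, the paper proves $e(u,v)\geq\delta\max(u,v)$ directly, tracking an explicit $\delta$ (taken no larger than $\sigma$, $n-1$, $\frac{m-1}{m}n$, etc.), whereas you prove only $e>0$ on $\mathcal C\setminus\{0\}$ and then invoke continuity, positive homogeneity, and compactness of $\mathcal C\cap S^1$ to extract $\delta$; your route is shorter but non-constructive in $\delta$. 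Second, the case analyses are organized differently: the paper splits according to whether the maximizer of $u\xi+v\eta$ on $\Gamma$ lies on the $\alpha$-axis, on the $\beta$-axis, or in $(\N\setminus\{0\})^2$, which in the first case leads to four separate sub-inequalities; you instead write $e=\min(e_1,e_2)$, reduce to $e_1$ by symmetry, and split according to the sign of $v$ and whether the $\Gamma_r$-maximizer has $\beta^\ast=0$, so that each case comes down to a single good choice of the partner point $P'\in\{(0,n),(m,0),(\alpha_1,\beta_1)\}$. Both organizations work; yours is a bit more streamlined, while the paper's gives a computable $\delta$.
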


As anticipated in Section \ref{modelstruct}, the bounds of Proposition \ref{lambda-prop-2} are not sharp in general, but they are sufficient for our purposes. 
\begin{proof}
We are going to show that if $(u,v)\in\R^2$ is such that $u\geq0$ and $v\geq-\sigma u$, then \bel\label{delta-bound}
m_{u,v}(\Gamma^{(1)})-m_{u,v}(\Gamma^{(2)})\geq \delta \max\{u,v\}.
\eel 
Recalling \eqref{lambda-zwt}, this proves that \be\lambda_\Gamma(z,w)\gtrsim |z|^{2\delta}+|w|^{2\delta}\ee in the region $\{|z|\geq1, |w|\geq |z|^{-\sigma}\}$. By symmetry, this also implies the same bound \eqref{delta-bound} in the region $\{|w|\geq1, |z|\geq |w|^{-\tau}\}$, and hence the statement of the Proposition.

To prove the claimed inequality, fix $(u,v)$ satisfying the assumptions above. We distinguish three cases depending on whether the maximum of $u\xi+v\eta$ on $\Gamma$ is attained on the $x$-axis, on the $y$-axis, or on $\left(\N\setminus\{0\}\right)^2$. In the analysis of the first two cases it will be useful to denote by $m$ and $n$ the largest natural numbers such that $(m,0)\in\N$ and $(0,n)\in\Gamma$. By assumption, $m,n\geq2$.

We denote by $\Gamma_{u,v}$ the subset of $\Gamma$ whose elements are not multiples of a fixed maximizer of $u\xi+v\eta$ on $\Gamma$. It is easy to see that \bel\label{max-formula}
m_{u,v}(\Gamma^{(1)})=m_{u,v}(\Gamma)+m_{u,v}(\Gamma_{u,v})-u-v.
\eel

\subsection*{Case I: $m_{u,v}(\Gamma)$ is attained on $\N\times\{0\}$} In this case $m_{u,v}(\Gamma)=mu$ and $\Gamma_{u,v}=\Gamma_u$ (recall the definition of $\Gamma_u$ in Section \ref{det-tr}). Moreover $m_{u,v}(\Gamma_r)=m_{u,v}(\Gamma)=mu$. By \eqref{max-formula}, we have\bee
&&m_{u,v}(\Gamma^{(1)})-m_{u,v}(\Gamma^{(2)})\\
&=&mu+m_{u,v}(\Gamma_u)-u-v-\max\{mu-u,m_{u,v}(\Gamma_u)-v\}\\
&=&\min\{m_{u,v}(\Gamma_u)-v,(m-1)u\}.
\eee
Inequality \eqref{delta-bound} is equivalent to the following four inequalities:\be
(m-1)u\geq\delta u, \quad (m-1)u\geq \delta v,\quad m_{u,v}(\Gamma_u)-v\geq\delta v,\quad m_{u,v}(\Gamma_u)-v\geq\delta u.
\ee

Since $m\geq2$, if we choose $\delta\leq 1$ the first inequality holds. Since the $m_{u,v}(\Gamma)$ is attained on the $x$-axis, we also have $mu\geq nv$, which implies the second one, if we choose $\delta\leq\frac{m-1}{m}n$.

To prove the third inequality, we distinguish two cases: $v\geq0$ and $v<0$. 

If $v\geq0$, we observe that $m_{u,v}(\Gamma_u)\geq nv$ (since $\{0\}\times\N\subseteq \Gamma_u$), and $nv\geq(1+\delta)v$, if we choose $\delta\leq n-1$ (which is a positive quantity, by the assumption $n\geq2$). 

If $v<0$, the inequality follows trivially from $m_{u,v}(\Gamma_u)\geq0$. To see this, recall the assumption $v\geq-\sigma u$. Since $\varphi_\Gamma$ is not decoupled, there is an element $(\alpha_1,\beta_1)\in\Gamma_u$ such that $\frac{\alpha_1}{\beta_1}=\sigma$. In particular, $m_{u,v}(\Gamma_u)\geq u\alpha_1+v\beta_1\geq0$.

We are left with the fourth inequality. We observe that \be
m_{u,v}(\Gamma_u)=\max_{(\alpha,\beta)\in\Gamma_u}u\alpha+\beta v
\ee is a continuous function of $v$ (for $u$ fixed) which is differentiable with derivative $\geq1$ outside a finite set. Since $\delta u+v$ has derivative $1$, it is enough to prove the fourth inequality when $v=-\sigma u$. By definition of $\sigma$, $m_{u,-\sigma u}(\Gamma_u)=0$, and our inequality becomes $\sigma u\geq \delta u$. Choosing $\delta\leq \sigma$, we are done.

This completes the analysis of the first case.

\subsection*{Case II: $m_{u,v}(\Gamma)$ is attained on $\{0\}\times\N$} In this case $v\geq0$, and necessarily $u\geq -\tau v$ (recall that $u\geq0$). Thus we can repeat the argument of Case I exchanging the role of the two variables $u$ and $v$.

\subsection*{Case III: $m_{u,v}(\Gamma)$ is attained on $\left(\N\setminus\{0\}\right)^2$} In this case $m_{u,v}(\Gamma)=m_{u,v}(\Gamma_r)=m_{u,v}(\Gamma_u)$, and 
\bee
&&m_{u,v}(\Gamma^{(1)})-m_{u,v}(\Gamma^{(2)})\\
&=&m_{u,v}(\Gamma)+m_{u,v}(\Gamma_{u,v})-u-v-\max\{m_{u,v}(\Gamma)-u, m_{u,v}(\Gamma)-v\}\\
&=&m_{u,v}(\Gamma_{u,v})-\max\{u,v\}.
\eee

Since $\Gamma_{u,v}$ contains the coordinate axis, $m_{u,v}(\Gamma_{u,v})\geq \max\{mu, nv\}$, and we can bound the expression above by $(\min\{m,n\}-1)\max\{u,v\}$. This completes the analysis of Case III, and hence the proof of \eqref{delta-bound}.

\end{proof}

\section{A holomorphic uncertainty principle}\label{hol-sec}

Corollary \ref{cor-lambda} and Proposition \ref{lambda-prop-2} allow to prove the $\mu$-coercivity estimate \eqref{MKH4} for every test function $u$ supported on the set 
\be
E:=\{|z|\geq1, |w|\geq |z|^{-\sigma}\}\cup\{|w|\geq1, |z|\geq |w|^{-\tau}\},
\ee
whenever either $\varphi$ is a homogeneous model monomial weight and $\mu=1+|z|^\sigma+|w|^\tau$, or $\varphi$ is a non-decoupled model monomial weight such that $(m,0),(0,n)\in\Gamma$ (for some $m,n\geq2$) and $\mu=1+|z|^\delta+|w|^\delta$ ($\delta>0$).

To take care of the complement of this region, and thus to complete the proof of Theorem \ref{model-thm} and Theorem \ref{model-disc}, we prove in this section the following lemma.

We denote by $D(z,r)$ the disc of center $z\in\C$ and radius $r$.

\begin{lem}\label{hol-unc} Let $V:D(z,r)\rightarrow[0,+\infty)$ be a measurable function and define \be
c:=\inf_{z'\in D(z,r)\setminus D\left(z,\frac{r}{2}\right)}V(z'). \ee
If $f\in L^2(D(z,r))$ is such that $\frac{\partial f}{\partial \overline{z}}\in L^2(D(z,r))$, then \bel\label{hol-ineq}
\int_{D(z,r)}\left|\frac{\partial f}{\partial \overline{z}}\right|^2+\int_{D(z,r)}V|f|^2\gtrsim \min\left\{c,\frac{1}{r^2}\right\}\int_{D(z,r)}|f|^2.
\eel
\end{lem}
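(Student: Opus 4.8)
The plan is to split into two regimes according to the size of $c$ relative to $r^{-2}$, since the right-hand side only asks for $\min\{c, r^{-2}\}$.

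\emph{The easy regime: $c$ small, say $c \le C_0 r^{-2}$ for a suitable absolute constant $C_0$.} Here I would simply discard the $\bar\partial$-term and the potential on the inner disc, and estimate
\[
\int_{D(z,r)} V|f|^2 \ge c \int_{D(z,r)\setminus D(z,r/2)} |f|^2.
\]
So it suffices to show that either $\int_{D(z,r)\setminus D(z,r/2)}|f|^2 \gtrsim \int_{D(z,r)}|f|^2$, or else the deficit is controlled by $r^{-2}\int |f|^2$ via the $\bar\partial$-term. This is where the holomorphy enters: if $f$ were holomorphic on $D(z,r)$, the sub-mean-value property forces a fixed fraction of its $L^2$ mass to sit in the outer annulus $D(z,r)\setminus D(z,r/2)$ (a holomorphic function cannot concentrate all its mass near the center — concretely, $\int_{D(z,\rho)}|f|^2$ is a log-convex, increasing function of $\rho^2$, or one expands in the orthonormal basis $\{(\zeta-z)^k\}$ and notes each $\|(\zeta-z)^k\|^2_{D(z,\rho)}$ grows like $\rho^{2k+2}$, so the annulus captures a $k$-independent fixed proportion). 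For general $f$ I would write $f = h + g$ where $h$ is the $L^2$-orthogonal projection of $f$ onto holomorphic functions on $D(z,r)$ (Bergman projection) and $g = f - h$ has $\bar\partial g = \bar\partial f$; the standard $L^2$ $\bar\partial$-estimate on the disc gives $\|g\|^2_{D(z,r)} \lesssim r^2 \|\bar\partial f\|^2_{D(z,r)}$. Then the annular mass of $f$ is at least the annular mass of $h$ minus that of $g$, the former is $\gtrsim \|h\|^2 \gtrsim \|f\|^2 - \|g\|^2$, and one absorbs $\|g\|^2 \lesssim r^2\|\bar\partial f\|^2$. Choosing constants, one concludes $\int_{D(z,r)}V|f|^2 + \int_{D(z,r)}|\bar\partial f|^2 \gtrsim c\,\|f\|^2_{D(z,r)}$, which is the desired bound since $c = \min\{c,r^{-2}\}$ up to the constant $C_0$.

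\emph{The other regime: $c$ large, $c \ge C_0 r^{-2}$.} Now $\min\{c, r^{-2}\} = r^{-2}$ and one only needs the plain holomorphic Poincaré-type inequality
\[
\int_{D(z,r)}\Big|\frac{\partial f}{\partial \overline{z}}\Big|^2 + c \int_{D(z,r)\setminus D(z,r/2)}|f|^2 \gtrsim \frac{1}{r^2}\int_{D(z,r)}|f|^2,
\]
and since $c \ge C_0 r^{-2}$ it even suffices to prove $\int_{D(z,r)}|\bar\partial f|^2 + r^{-2}\int_{D(z,r)\setminus D(z,r/2)}|f|^2 \gtrsim r^{-2}\int_{D(z,r)}|f|^2$. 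Again decomposing $f = h+g$ as above: for $g$ we have $\|g\|^2 \lesssim r^2\|\bar\partial f\|^2$ so that term is fine, and for the holomorphic part $h$ the statement reduces to the scaled inequality $\int_{D(0,1)\setminus D(0,1/2)}|h|^2 \gtrsim \int_{D(0,1)}|h|^2$ for holomorphic $h$, which is the fixed-proportion fact already used. By scaling ($\zeta \mapsto z + r\zeta$, $f(\zeta) \mapsto f(z+r\zeta)$), I may and would assume throughout $z=0$, $r=1$, which makes the $\bar\partial$-estimate constant and the annulus-mass constant genuinely absolute.

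\emph{Main obstacle.} The only non-trivial analytic input is the $L^2$ solvability of $\bar\partial$ on the disc with the quadratic gain $\|g\|_{D(z,r)}^2 \lesssim r^2\|\bar\partial f\|_{D(z,r)}^2$ together with the orthogonal decomposition $f = h+g$, $h$ holomorphic; on a disc this is elementary (Hörmander's estimate with trivial weight, or an explicit Cauchy-transform solution operator), so the real content — and the place to be careful about constants — is the quantitative ``holomorphic functions don't concentrate at the center'' lemma: that $\int_{D(0,1/2)}|h|^2 \le \theta \int_{D(0,1)}|h|^2$ for some fixed $\theta < 1$ and all holomorphic $h \in L^2(D(0,1))$. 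I would prove this cleanly by expanding $h = \sum_{k\ge 0} a_k \zeta^k$, computing $\int_{D(0,\rho)}|h|^2 = \pi\sum_k |a_k|^2 \rho^{2k+2}/(k+1)$, and noting $\rho^{2k+2} \le \rho^2$ for $\rho \le 1$ termwise, giving $\int_{D(0,1/2)}|h|^2 \le \tfrac14 \int_{D(0,1)}|h|^2$, i.e. $\theta = 1/4$; equivalently the annulus carries at least $3/4$ of the mass. With that in hand, both regimes close by elementary triangle-inequality bookkeeping and absorption.
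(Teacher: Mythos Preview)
Your proof is correct and rests on exactly the same two analytic ingredients the paper uses: the $\bar\partial$-Poincar\'e inequality $\|f-B(f)\|_{L^2(D)}^2\lesssim\|\bar\partial f\|_{L^2(D)}^2$ coming from $L^2$ solvability of $\bar\partial$ on the disc, and the annulus estimate $\int_{D\setminus\frac12 D}|h|^2\gtrsim\int_D|h|^2$ for holomorphic $h$ (your explicit computation with the monomial basis giving the constant $\theta=\tfrac14$ is a clean proof of this). The difference is purely organizational. The paper does \emph{not} split on the size of $c$ versus $r^{-2}$; instead, after rescaling to $r=1$, it splits $f=f_i+f_e$ according to support on $\tfrac12 D$ versus the annulus and runs a dichotomy on whether $\|f_e\|^2\geq\eps\|f_i\|^2$: in the ``yes'' branch the potential term alone yields $\gtrsim c\|f\|^2$, in the ``no'' branch the kinetic term alone (via a further sub-dichotomy involving $B(f_i)$) yields $\gtrsim\|f\|^2$, and the minimum of the two outcomes is exactly $\min\{c,1\}$. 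Your route---split on $c$ versus $r^{-2}$ and in each regime run the single Bergman decomposition $f=B(f)+(f-B(f))$ with triangle-inequality bookkeeping---is more streamlined and avoids the nested dichotomy; the paper's route has the minor conceptual bonus that each branch isolates the contribution of a single term on the left-hand side.
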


The proof is based on a Poincar\'e-type inequality related to the $\frac{\partial}{\partial \overline{z}}$ operator and an elementary consequence of the Cauchy formula, which we now discuss. 

Put $D:=D(0,1)$. It is well-known (cf., e.g., \cite{chen-shaw}) that $\frac{\partial}{\partial \overline{z}}$ is solvable in $L^2(D)$, i.e., that if $g\in L^2(D)$ then there exists $f\in L^2(D)$ such that $\frac{\partial f}{\partial \overline{z}}=g$ and \be
\int_D|f|^2\lesssim \int_D|g|^2.
\ee
If $f\in L^2(D)$ is such that $\frac{\partial f}{\partial \overline{z}}\in L^2(D)$, the above solvability result yields $\widetilde{f}\in L^2(D)$ such that $f-\widetilde{f}$ is holomorphic and $\int_D|\widetilde{f}|^2\lesssim \int_D|\frac{\partial f}{\partial \overline{z}}|^2$. In particular, denoting by $B:L^2(D)\rightarrow L^2(D)$ the orthogonal projection onto the space of $L^2$ holomorphic functions (i.e., the unweighted Bergman projector of the unit disc), we have \bel\label{hol-poincare}
\int_D|f-B(f)|^2\leq \int_D|f-(f-\widetilde{f})|^2\lesssim \int_D\left|\frac{\partial f}{\partial \overline{z}}\right|^2.
\eel
This is the inequality we need. One should compare it with the usual Poincar\'e inequality in which $\frac{\partial}{\partial \overline{z}}$ is replaced by $\nabla$, and $B$ by $\frac{1}{|D|}\int_D$. Of course one could rescale the estimate to apply it to an arbitrary disc.

The second ingredient is the following inequality, which holds for every holomorphic function $h:D\rightarrow\C$: \bel\label{hol-cauchy}
\int_D|h|^2\lesssim \int_{D\setminus\frac{1}{2}D}|h|^2,
\eel and which follows easily from the Cauchy integral formula.

\begin{proof}[Proof of Lemma \ref{hol-unc}] By a trivial rescaling it is enough to prove the lemma for $z=0$ and $r=1$. Let then $V:D\rightarrow[0,+\infty)$ be such that $V\geq c$ on $D\setminus \frac{1}{2}D$, and $f\in L^2(D)$ be such that $\frac{\partial f}{\partial \overline{z}}\in L^2(D)$. If $\eps>0$ is a small parameter to be fixed later and we write $f=f_i+f_e$, where $f_e$ is zero on $\frac{1}{2}D$ and $f_i$ is zero on $D\setminus \frac{1}{2}D$, we have the following two possibilities:\begin{enumerate}
\item either $\int_D|f_e|^2\geq \eps\int_D|f_i|^2$, 
\item or $\int_D|f_e|^2< \eps\int_D|f_i|^2$.
\end{enumerate}

If (1) happens, a significant portion of the $L^2$ mass of $f$ is contained in the corona $D\setminus \frac{1}{2}D$ and \be
\int_D|f_e|^2\geq \frac{\eps}{2}\int_D|f_i|^2+\frac{1}{2}\int_D|f_e|^2\geq  \frac{\eps}{2}\int_D|f|^2.
\ee
Therefore\be
\int_D\left|\frac{\partial f}{\partial \overline{z}}\right|^2+\int_DV|f|^2\geq \int_{D\setminus \frac{1}{2}D}V|f|^2\geq c\int_D|f_e|^2\geq c\frac{\eps}{2}\int_D|f|^2,
\ee and \eqref{hol-ineq} holds.

If (2) happens, we use \eqref{hol-poincare}:\be
\int_D\left|\frac{\partial f}{\partial \overline{z}}\right|^2+\int_DV|f|^2\gtrsim \int_D|f-B(f)|^2.\ee
By the linearity of $B$ and condition (2), we have
\bee
\int_D|f-B(f)|^2&\geq&\frac{1}{2}\int_D|f_i-B(f_i)|^2-\int_D|f_e-B(f_e)|^2\\
&\geq&  \frac{1}{2}\int_D|f_i-B(f_i)|^2-\int_D|f_e|^2\\
&\geq&  \frac{1}{2}\int_D|f_i-B(f_i)|^2-\eps\int_D|f_i|^2.
\eee In the second line we used the fact that $1-B$ is an orthogonal projection. 

We claim that \bel\label{hol-orthogonal}
\int_D|f_i-B(f_i)|^2\geq a\int_D|f_i|^2,
\eel where $a$ is some small absolute constant.

Inequality \eqref{hol-orthogonal} immediately implies, choosing $\eps= \frac{a}{4}$, that \be
\int_D\left|\frac{\partial f}{\partial \overline{z}}\right|^2+\int_DV|f|^2\gtrsim\int_D|f_i|^2\gtrsim\int_D|f|^2.
\ee 
We are reduced to proving \eqref{hol-orthogonal}. In order to do this, we separate the two cases (for a new parameter $\delta$):\begin{enumerate}
\item $\int_{D\setminus \frac{1}{2}D}|B(f_i)|^2\geq\delta\int_D|f_i|^2$,
\item $\int_{D\setminus \frac{1}{2}D}|B(f_i)|^2<\delta\int_D|f_i|^2$.
\end{enumerate}

If (1) holds, \be
\int_D|f_i-B(f_i)|^2\geq\int_{D\setminus \frac{1}{2}D}|f_i-B(f_i)|^2=\int_{D\setminus \frac{1}{2}D}|B(f_i)|^2\geq\delta\int_D|f_i|^2.
\ee

If (2) holds instead, we apply \eqref{hol-cauchy} to the holomorphic function $B(f_i)$ to deduce that $\int_D|B(f_i)|^2\lesssim \delta\int_D|f_i|^2$. If we choose $\delta$ small enough we can write\be
\int_D|f_i-B(f_i)|^2\geq \frac{1}{2}\int_D|f_i|^2-\int_D|B(f_i)|^2\geq \frac{1}{4}\int_D|f_i|^2.
\ee
This concludes the proof of \eqref{hol-orthogonal}.
\end{proof}

Notice how the nature of uncertainty principle of the previous result is revealed by its proof: it shows that a function $f$ defined on a disc cannot be concentrated on a strictly smaller disc without having a large ``holomorphic kinetic energy" $\int_{D(z,r)}\left|\frac{\partial f}{\partial \overline{z}}\right|^2$.

One should also compare Lemma \ref{hol-unc} with the so-called Fefferman-Phong inequalities (see, e.g., \cite{fe-unc} or \cite{shen}), where $c$ is replaced by some kind of average of $V$ on the disc. Notice that one cannot hope for an improvement of Lemma \ref{hol-unc} of the form:\bel\label{false}
\int_{D(z,r)}\left|\frac{\partial f}{\partial \overline{z}}\right|^2+\int_{D(z,r)}V|f|^2\gtrsim \frac{1}{r^2}\min\left\{\int_{D(z,r)}V,1\right\}\int_{D(z,r)}|f|^2.
\eel
In fact, if $V\equiv 1$ on $D(0,\frac{1}{2})$ and $V\equiv0$ on $D(0,1)\setminus D(0,\frac{1}{2})$, we can test the hypothetical inequality\eqref{false} on $f(z)=z^m$ and obtain\be
\frac{4^{-m}}{m}\approx\int_{D(0,\frac{1}{2})}|z|^{2m}\gtrsim \int_{D(0,1)}|z|^{2m}\approx \frac{1}{m},
\ee which is a contradiction when $m$ tends to $+\infty$.

\section{Energy estimates}\label{est-sec}

Let $\varphi=\varphi_\Gamma$ be a model monomial weight. As in Section \ref{lambda-sec}, we put\be
E:=\{|z|\geq1, |w|\geq |z|^{-\sigma}\}\cup\{|w|\geq1, |z|\geq |w|^{-\tau}\}, 
\ee where $\sigma=\max_{(\alpha,\beta)\in\Gamma_u}\frac{\alpha}{\beta}$ and $\tau=\max_{(\alpha,\beta)\in\Gamma_r}\frac{\beta}{\alpha}$. 

\begin{prop}\label{key}
Assume that there are $a,b\geq0$ such that \bel\label{hyp}
\lambda_\Gamma(z,w)\gtrsim |z|^{2a}+|w|^{2b}\qquad \forall (z,w)\in E.
\eel
Then 
\be
\int_{\C^2}\left|\frac{\partial u}{\partial \overline{z}}\right|^2e^{-2\varphi}+\int_{\C^2}\left|\frac{\partial u}{\partial \overline{w}}\right|^2e^{-2\varphi}+2\int_{\C^2} \lambda_\Gamma|u|^2e^{-2\varphi}\gtrsim \int_{\C^2}(1+|z|^a+|w|^b)^2|u|^2e^{-2\varphi},
\ee for every $u\in C^\infty_c(\C^2)$.

\end{prop}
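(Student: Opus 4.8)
The plan is to split $\C^2$ into the region $E$, where the hypothesis \eqref{hyp} gives a pointwise lower bound on $\lambda_\Gamma$, and its complement $E^c$, which consists of two ``hyperbolic'' neighborhoods of the coordinate axes $\{z=0\}$ and $\{w=0\}$. On $E$ we simply discard the kinetic terms and use $2\int \lambda_\Gamma|u|^2 e^{-2\varphi}\gtrsim \int_E(|z|^{2a}+|w|^{2b})|u|^2e^{-2\varphi}$; since on $E$ we also have $|z|^{2a}+|w|^{2b}\gtrsim 1$ only where $|z|$ or $|w|$ is bounded below, and in any case $(1+|z|^a+|w|^b)^2\approx 1+|z|^{2a}+|w|^{2b}$, this handles the right-hand side restricted to $E$ up to controlling the constant $1$ near the origin, which is absorbed into the $E^c$ estimate below (the part of $E$ near the origin is compact, so $(1+|z|^a+|w|^b)^2\approx1$ there and one can feed it into a local Poincaré/uncertainty bound as in Lemma \ref{hol-unc}). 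So the real content is the estimate on $E^c$.

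On $E^c$ I would work one axis at a time; consider the neighborhood $N:=\{|z|\ge 1,\ |w|< |z|^{-\sigma}\}$ of $\{w=0\}$ (together with a bounded piece near the origin; the piece $\{|z|<1\}$ is symmetric or compact). Here the idea is to apply the holomorphic uncertainty principle Lemma \ref{hol-unc} in the $w$-variable, on discs $D(0,r(z))$ of radius comparable to the ``width'' $|z|^{-\sigma}$ of the neighborhood at height $|z|$, treating $z$ as a frozen parameter and $V$ as a suitable multiple of $\lambda_\Gamma$. The key point is that just outside this disc, i.e. where $|w|\approx |z|^{-\sigma}$, we are on $\partial E$ and \eqref{hyp} forces $\lambda_\Gamma(z,w)\gtrsim |z|^{2a}$ (indeed $|w|^{2b}\ge 0$ and $|z|^{2a}$ is already there). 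Thus in the notation of Lemma \ref{hol-unc}, $c\gtrsim |z|^{2a}$ while $r(z)^{-2}\approx |z|^{2\sigma}$, so $\min\{c,r(z)^{-2}\}\gtrsim \min\{|z|^{2a},|z|^{2\sigma}\}$, and since on $N$ we have $|z|\ge 1$ this is $\gtrsim |z|^{2\min\{a,\sigma\}}$. To get the full weight $1+|z|^a+|w|^b$ one notes that on $N$, $|w|<|z|^{-\sigma}\le 1$, so $|w|^b\lesssim 1$ and $(1+|z|^a+|w|^b)^2\approx 1+|z|^{2a}$; one then needs $\min\{|z|^{2a},|z|^{2\sigma}\}\gtrsim 1+|z|^{2a}$ on $N$, which requires $\sigma\ge a$. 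This is exactly the situation in Theorem \ref{model-thm} (where $a=\sigma$, $b=\tau$), and in Theorem \ref{model-disc} one takes $a=b=\delta$ small, so $\delta\le\sigma$ and $\delta\le\tau$ can be arranged — I would simply add the hypothesis, or observe it is automatically satisfied in the two cases of interest, that $a\le\sigma$ and $b\le\tau$.

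The mechanics of passing from the one-dimensional Lemma \ref{hol-unc} to the two-dimensional integral estimate on $N$ go as follows: for each fixed $z$ with $|z|\ge 1$, apply Lemma \ref{hol-unc} with $f=u(z,\cdot)$ on the disc $D(0, C|z|^{-\sigma})$ for a suitable constant $C$ chosen so that the corona $D(0,C|z|^{-\sigma})\setminus D(0,\tfrac{C}{2}|z|^{-\sigma})$ lies inside $E$; this gives
\be
\int_{|w|<C|z|^{-\sigma}}\Bigl|\frac{\partial u}{\partial\overline w}(z,w)\Bigr|^2\,dw + \int_{|w|<C|z|^{-\sigma}} \lambda_\Gamma(z,w)|u(z,w)|^2\,dw \gtrsim (1+|z|^{2a})\int_{|w|<C|z|^{-\sigma}}|u(z,w)|^2\,dw,
\ee
then multiply by $e^{-2\varphi(z,w)}$ and integrate in $z$; the weight $e^{-2\varphi}$ is essentially constant in $w$ on such a small disc (since $\varphi$ is a polynomial in $|z|^2,|w|^2$ and $|w|^{-\sigma}\le 1$, one has $\varphi(z,w)\approx\varphi(z,w')$ for $w,w'$ in the disc, uniformly in $z$), so it can be pulled through the one-dimensional inequality at the cost of an absolute constant. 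Summing this contribution with the symmetric one for the $\{z=0\}$-neighborhood and with the $E$-estimate, and noting that the overlaps and the bounded region near the origin only cost constants, yields the claim.

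The main obstacle I anticipate is the bookkeeping needed to make the overlapping regions $E$, $\{w=0\}$-neighborhood, $\{z=0\}$-neighborhood, and the compact piece near the origin fit together with a genuine partition (or a controlled finite cover) so that no $L^2$ mass is double-counted with the wrong sign — the uncertainty principle is not linear, so one must localize $u$ multiplicatively, which reintroduces derivative-of-cutoff error terms $|\nabla\chi|^2|u|^2$ that have to be reabsorbed; here one uses that the cutoffs can be taken to vary on the natural anisotropic scale ($\partial_z\chi \lesssim |z|^{-1}\lesssim 1$, $\partial_w\chi\lesssim |z|^\sigma$ near $\{w=0\}$), so $|\nabla\chi|^2 \lesssim |z|^{2\sigma}\approx r(z)^{-2}$, which is dominated by the right-hand side we are proving exactly when $\sigma\le a$ fails — so some care is needed, and one may prefer to avoid cutoffs entirely by doing the slicing argument directly as sketched above, applying Lemma \ref{hol-unc} fiberwise without ever multiplying $u$ by a cutoff. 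That fiberwise approach is cleaner and I would adopt it.
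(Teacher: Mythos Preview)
Your approach matches the paper's: on $E$ the estimate is immediate from \eqref{hyp}, and on the hyperbolic strips around the axes one applies Lemma \ref{hol-unc} fiberwise in the transverse variable, avoiding cutoffs exactly as you recommend at the end. The paper sharpens two points you leave vague: (i) the weight on the $w$-fibers is handled by writing $\varphi=\varphi_{\Gamma_u}+\psi(z)$ and noting that $\varphi_{\Gamma_u}$ is \emph{uniformly bounded} on $U_r=\{|z|>1,\ |w|\le 2|z|^{-\sigma}\}$ precisely by the definition of $\sigma$ (your ``essentially constant in $w$'' is correct, but the reason is this structural one, not a generic Taylor bound); (ii) the bounded piece $U_0=\{|z|,|w|\le 2\}$ is treated by applying Lemma \ref{hol-unc} \emph{twice}, once in $z$ and once in $w$. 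Your observation that one implicitly needs $a\le\sigma$ and $b\le\tau$ (so that the $\min\{c,r^{-2}\}$ in Lemma \ref{hol-unc} equals $c$) is correct and is used silently in the paper; it holds in both intended applications.
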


Putting Corollary \ref{cor-lambda}, Proposition \ref{lambda-prop-2}, and Proposition \ref{key} together, and recalling the discussion of Section \ref{modelstruct}, the reader can easily see that the proofs of Theorem \ref{model-thm} and Theorem \ref{model-disc} are completed.

\begin{proof}
We introduce the \emph{uncertainty regions} $U_0:=\{0\leq |z|, |w|\leq 2\}$,\be
U_r:=\{|z|> 1, 0\leq |w|\leq 2|z|^{-\sigma}\} \quad  \text{and}\quad U_u:=\{|w|> 1, 0\leq|z|\leq 2|w|^{-\tau}\}.
\ee

If $u\in C^\infty_c(\C^2)$ and $\Omega\subseteq\C^2$, we put
\be
F_\Omega(u):=\int_\Omega\left|\frac{\partial u}{\partial \overline{z}}\right|^2e^{-2\varphi}+\int_\Omega\left|\frac{\partial u}{\partial \overline{w}}\right|^2e^{-2\varphi}+2\int_\Omega \lambda_\Gamma|u|^2e^{-2\varphi}.\ee

We proceed by proving separately the estimate \bel\label{claim}
F_\Omega(u)\gtrsim\int_\Omega(1+|z|^a+|w|^b)^2|u|^2e^{-2\varphi},
\eel
when $\Omega=U_0$, $U_r$ and $U_u$, the case $\Omega=E$ being trivially implied by the hypothesis \eqref{hyp}. 

\subsection*{Estimate for $U_0$:}
By the trivial estimate $\sup_{(z,w)\in U_0}\varphi(z,w)\lesssim1$, we have
\be
F_{U_0}(u)\gtrsim\int_{U_0}\left(\left|\frac{\partial u}{\partial \overline{z}}\right|^2+\left|\frac{\partial u}{\partial \overline{w}}\right|^2\right)+\int_{U_0}\lambda_\Gamma |u|^2.
\ee 
Since $U_0=(2D)\times(2D)$, by Fubini's theorem and applying Lemma \ref{hol-unc} twice, we get ($d\mathcal{L}$ denotes Lebesgue measure):
\bee
&&\int_{(2D)\times(2D)}\left(\left|\frac{\partial u}{\partial \overline{z}}\right|^2+\left|\frac{\partial u}{\partial \overline{w}}\right|^2\right)+\int_{(2D)\times(2D)}\lambda_\Gamma |u|^2\\
&=& \int_{2D}\int_{2D}\left(\left|\frac{\partial u}{\partial \overline{z}}(z,w)\right|^2+\lambda_\Gamma(z,w) |u(z,w)|^2\right)d\mathcal{L}(z)d\mathcal{L}(w)\\
&+&\int_{(2D)\times(2D)}\left|\frac{\partial u}{\partial \overline{w}}\right|^2\\
&\gtrsim& \int_{2D}\left\{\left(\min_{1<|z'|\leq2}\lambda_\Gamma(z',w)\right)\int_{2D} |u(z,w)|^2d\mathcal{L}(z)\right\}d\mathcal{L}(w)\\
&+&\int_{(2D)\times(2D)}\left|\frac{\partial u}{\partial \overline{w}}\right|^2\\
&=&  \int_{2D}\int_{2D}\left(\left|\frac{\partial u}{\partial \overline{w}}(z,w)\right|^2+\left(\min_{1<|z'|\leq2}\lambda_\Gamma(z',w)\right) |u(z,w)|^2\right)d\mathcal{L}(w)d\mathcal{L}(z)\\
&\gtrsim&  \left(\min_{1<|z'|\leq2,1<|w'|\leq2}\lambda_\Gamma(z',w')\right) \int_{2D}\int_{2D}|u(z,w)|^2d\mathcal{L}(w)d\mathcal{L}(z).
\eee 
Notice that $\{1<|z'|\leq2,1<|w'|\leq2\}\subseteq E$ and hence \eqref{hyp} implies that the minimum above is $\approx1$. Thus \be
F_{U_0}(u)\gtrsim \int_{U_0}|u|^2\geq \int_{U_0}|u|^2e^{-2\varphi}\gtrsim\int_{U_0}(1+|z|^{2a}+|w|^{2b})|u|^2e^{-2\varphi},
\ee
where we used again the fact that $|z|,|w|\leq2$ on $U_0$.

\subsection*{Estimate for $U_r$:}
Notice that $\varphi=\varphi_{\Gamma_u}+\psi$, where $\psi$ is a function of $z$ alone.

By Fubini's theorem $F_{U_r}(u)\geq \int_{|z|\geq1}I(z)e^{-2\psi(z)}d\mathcal{L}(z)$, where  
\be
I(z):=\int_{D(0, 2|z|^{-\sigma})}\left(\left|\frac{\partial u}{\partial \overline{w}}(z,w)\right|^2+\lambda_\Gamma(z,w) |u(z,w)|^2\right)e^{-2\varphi_{\Gamma_u}(z,w)}d\mathcal{L}(w).
\ee

By the definition of $\sigma$, if $(z,w)\in U_r$ and $(\alpha,\beta)\in\Gamma_u$, then \be
|z^\alpha w^\beta|^2=\left(|z|^{\frac{\alpha}{\beta}}|w|\right)^{2\beta}\leq \left(|z|^\sigma|w|\right)^{2\beta}\lesssim 1,
\ee where in the first inequality we used the fact that $|z|\geq 1$. Summing over $(\alpha,\beta)\in\Gamma_u$, we obtain \be
\sup_{(z,w)\in U_r}\varphi_{\Gamma_u}(z,w)\lesssim 1.\ee
  
Using this bound and Lemma \ref{hol-unc} we obtain, for every $z$ of modulus greater than or equal to $1$,\bee
I(z)&\gtrsim&\int_{D(0, 2|z|^{-\sigma})}\left(\left|\frac{\partial u}{\partial \overline{w}}(z,w)\right|^2+\lambda_\Gamma(z,w) |u(z,w)|^2\right)d\mathcal{L}(w)\\
&\gtrsim& \left(\min_{|z|^{-\sigma}<|w'|\leq 2|z|^{-\sigma}}\lambda_\Gamma(z,w')\right)\int_{D(0, 2|z|^{-\sigma})}|u(z,w)|^2d\mathcal{L}(w).
\eee
Since the points $(z,w)$ such that $|z|\geq1$ and $|w|>|z|^{-\sigma}$ are contained in $E$, the hypothesis \eqref{hyp} gives\be
\min_{|z|^{-\sigma}<|w'|\leq 2|z|^{-\sigma}}\lambda_\Gamma(z,w')\approx \min_{|z|^{-\sigma}<|w'|\leq 2|z|^{-\sigma}} |z|^{2a}+|w'|^{2b}\approx |z|^{2a},
\ee for every $|z|\geq1$. We have\bee
F_{U_r}(u)&\gtrsim&\int_{|z|\geq1}\left(|z|^{2a}\int_{D(0, 2|z|^{-\sigma})}|u(z,w)|^2d\mathcal{L}(w)\right)e^{-2\psi(z)}d\mathcal{L}(z)\\
&\geq&\int_{|z|\geq1}\left(|z|^{2a}\int_{D(0, 2|z|^{-\sigma})}|u(z,w)|^2e^{-2\varphi_{\Gamma_u}(z,w)}d\mathcal{L}(w)\right)e^{-2\psi(z)}d\mathcal{L}(z)\\
&\gtrsim&\int_{U_r}|z|^{2a}|u(z,w)|^2e^{-2\varphi(z,w)}d\mathcal{L}(z,w)\\
&\gtrsim&\int_{U_r}(1+|z|^{2a}+|w|^{2b})|u(z,w)|^2e^{-2\varphi(z,w)}d\mathcal{L}(z,w).
\eee
The last step follows from the inequalities $|w|\leq 1$ and $|z|\geq1$, which hold for any $(z,w)\in U_r$.

\subsection*{Estimate for $U_u$:} This is done in complete analogy with the estimate for $U_r$, exchanging the role played by $z$ and $w$, and replacing $\sigma$ with $\tau$. 

The proof is complete.\end{proof}

\bibliographystyle{amsalpha}
\bibliography{model-weights}

\end{document}